\newcommand{\R}{\mathbb{R}}
\newcommand{\Z}{\mathbb{Z}}
\newcommand{\eps}{\varepsilon}
\newcommand{\hH}{\widehat{H}}
\newcommand{\ti}{\textbf{i}}
\newcommand{\tj}{\textbf{j}}
\newcommand{\bl}{\boldsymbol{\lambda}}
\newcommand{\blambda}{\boldsymbol{\lambda}}
\newcommand{\some}{\diamond}
\newcommand{\idemp}{e(\ti)}
\newcommand{\idmep}{e(\ti)}
\newcommand{\idmepj}{e(\textbf{j})}
\newtheorem{theorem}{\textbf{Theorem}}[section]
\newtheorem{corollary}[theorem]{\textbf{Corollary}}
\newtheorem{lemma}[theorem]{\textbf{Lemma}}
\newtheorem{proposition}[theorem]{\textbf{Proposition}}
\newtheorem*{theo-intro}{\textbf{Theorem}}
\newtheorem*{claim*}{Claim}
\theoremstyle{definition}
\newtheorem{definition}[theorem]{\textbf{Definition}}
\theoremstyle{remark}
\newtheorem{remark}[theorem]{\textbf{Remark}}
\newtheorem{remarks}[theorem]{\textbf{Remarks}}
\numberwithin{equation}{section}
\begin{document} 

\title{Affine Hecke algebras of type D and generalisations of quiver Hecke algebras}
\author{L. Poulain d'Andecy and R. Walker}

\maketitle

\begin{abstract}
\let\thefootnote\relax\footnote{The authors thank Salim Rostam for interesting discussions and useful comments about this work. The first author is supported by \emph{Agence Nationale de la Recherche} through the JCJC project ANR-18-CE40-0001. The second author was supported in part by the European Research Council in the framework of The European Union H2020 with the Grant ERC 647353 QAffine.}We define and study cyclotomic quotients of affine Hecke algebras of type $D$. We establish an isomorphism between (direct sums of blocks of) these cyclotomic quotients and a generalisation of cyclotomic quiver Hecke algebras which are a family of $\Z$-graded algebras closely related to algebras introduced by Shan, Varagnolo and Vasserot. To achieve this, we first complete the study of cyclotomic quotients of affine Hecke algebras of type $B$ by considering the situation when a deformation parameter $p$ squares to 1. We then relate the two generalisations of quiver Hecke algebras showing that the one for type $D$ can be seen as fixed point subalgebras of their analogues for type $B$, and we carefully study how far this relation remains valid for cyclotomic quotients. This allows us to obtain the desired isomorphism. This isomorphism completes the family of isomorphisms relating affine Hecke algebras of classical types to (generalisations of) quiver Hecke algebras, originating in the famous result of Brundan and Kleshchev for the type A.
\end{abstract}

\section{Introduction}\label{sec-intro}

Khovanov-Lauda-Rouquier (KLR) algebras, often referred to as quiver Hecke algebras, are a family of graded algebras which have been introduced by Khovanov, Lauda \cite{KL} and Rouquier \cite{Rou} for the purposes of categorifying quantised enveloping algebras. Namely, the finite-dimensional graded projective modules over KLR algebras categorify the negative half of the quantised universal enveloping algebra of the Kac-Moody algebra associated to some Cartan datum.

There exist certain finite-dimensional quotients of KLR algebras, called cyclotomic KLR algebras, which categorify an irreducible highest weight module over a quantum Kac-Moody algebra. Brundan and Kleshchev \cite{BK} proved that cyclotomic KLR algebras (for type $A$ quivers) are isomorphic to blocks of cyclotomic Hecke algebras, which are certain finite-dimensional quotients of affine Hecke algebras of type $A$. Consequently there is a non-trivial $\mathbb{Z}$-grading on the cyclotomic Hecke algebras since cyclotomic KLR algebras are naturally $\Z$-graded. Furthermore, the deformation parameter does not appear in the defining relations of the cyclotomic KLR algebra. Rather, the deformation parameter appears only in the definition of the quiver which is fixed once the order of the deformation parameter is known.

\vskip .2cm
In our previous work \cite{PW} we defined and studied cyclotomic quotients of affine Hecke algebras of type $B$ by generalising the isomorphism of Brundan and Kleshchev. We first gave a family of isomorphisms between blocks of cyclotomic Hecke algebras and cyclotomic KLR algebras. We then extended one of these isomorphisms to construct an explicit isomorphism between cyclotomic quotients of affine Hecke algebras of type $B$ and quotients of a family of $\mathbb{Z}$-graded algebras defined by generators and relations depending upon an underlying quiver. In fact this family of algebras is a generalisation of the algebras introduced by Varagnolo--Vasserot in \cite{VV} where they considered representations of affine Hecke algebras of type $B$ such that certain elements do not have eigenvalues $\pm 1$. We did not impose this restriction in \cite{PW}. 

\vskip .2cm
The main purpose of this paper is to extend these results within the framework of affine Hecke algebras of type $D$. In order to do this, we use the well-known connection with the affine Hecke algebra of type $B$. The affine Hecke algebra of type $B$ has two deformation parameters, denoted $p$ and $q$, and when one deformation parameter ($p$ in our notation) squares to 1, it admits the affine Hecke algebra of type $D$ as a fixed point subalgebra with respect to a certain involutive algebra automorphism. By restricting this automorphism to cyclotomic quotients, we naturally obtain cyclotomic quotients for type $D$, extending the family of cyclotomic quotients of affine Hecke algebras of other types (namely, types $A$ and $B$). We note that, by their definition, understanding the representation theory of all these cyclotomic quotients allows one to understand all finite-dimensional representations of affine Hecke algebras of type $D$.

However, in \cite{PW}, we did not consider affine Hecke algebras of type $B$ with $p^2=1$. So our first task in this paper is to construct cyclotomic quotients of affine Hecke algebras of type $B$ when the deformation parameter $p$ satisfies $p^2=1$ so that, together with the work in \cite{PW}, we have considered all cyclotomic quotients of affine Hecke algebras of type $B$. We establish in this paper an isomorphism between these cyclotomic quotients (when $p^2=1$) and certain generalisations of cyclotomic quiver Hecke algebras for type $B$. We emphasise that one cannot simply take $p^2=1$ in the constructions of \cite{PW}. Indeed the resulting generalisation of quiver Hecke algebras of type B needed when $p^2=1$ turns out to be different than the one obtained when $p^2 \neq 1$. 

Fortunately, as one could expect, the situation is somewhat simpler when $p^2=1$ and we can use some of the results in \cite{PW} and provide the necessary modifications while avoiding a full repetition of the technical details in \cite {PW}. We note that, as in \cite{PW}, the main ideas behind these isomorphisms are inspired by the original construction of Brundan and Kleshchev for the type $A$ setting \cite{BK}. Furthermore, one could ask for a definition of a family of algebras placed between the two generalisations of quiver Hecke algebras of type B (when $p^2=1$ and when $p^2\neq 1$). This is the subject of another paper \cite{PR}.
    
\vskip .2cm
We then look for a suitable algebra to play the role of the cyclotomic quiver Hecke algebra for type $D$. This is found to be a family of $\Z$-graded algebras introduced by Shan, Varagnolo and Vasserot in \cite{SVV} which they used to prove a conjecture of Kashiwara and Miemietz \cite{KM} concerning the category of representations of affine Hecke algebras of type $D$. We note that in the work of both Kashiwara--Miemietz \cite{KM} and Shan--Varagnolo--Vasserot \cite{SVV}, a restriction was imposed on the representations of the affine Hecke algebras of type $D$. This restriction stated that $\pm1$ does not appear in the set of eigenvalues of certain elements (or equivalently, $\pm1$ does not appear in the vertex set of the quiver defining the algebras used by Shan, Varagnolo and Vasserot). Here we work with no restriction on the considered representations and thus we have to generalise the definition of the algebras of Shan, Varagnolo and Vasserot in order to include the possibility of $\pm1$ in the set of eigenvalues. We remark here that our convention of the roles of the deformation parameters $p$ and $q$ in this paper is the opposite convention to that used by Shan--Varagnolo--Vasserot in \cite{VV} and Walker in \cite{W}. 

Once the generalisation of quiver Hecke algebras for type $D$ is introduced, we prove our main result in two steps. First, we establish a connection with the quiver Hecke algebra for type $B$ when $p^2=1$. More precisely, we show that it is isomorphic to the fixed point subalgebra of the quiver Hecke algebra for type $B$ (for a certain involutive automorphism) when $p^2=1$. Then we are finally able to achieve our original objective, namely to show that cyclotomic quotients of affine Hecke algebras of type $D$ are isomorphic to our generalisation of cyclotomic quiver Hecke algebras for type $D$.

\vskip .2cm
The quiver Hecke algebras for type $D$ (and their cyclotomic versions) are naturally $\Z$-graded. Thus, the result in this paper provides a non-trivial $\Z$-grading on the cyclotomic quotients of affine Hecke algebras of type $D$. It suggests in particular that one can study a graded representation theory for the affine Hecke algebra of type $D$.

\vskip .1cm
Moreover, from the definitions, the quiver Hecke algebras for type $D$ depend only on the following information: a quiver and an involution on the set of vertices (the vertex sets are subsets of a field $K$ and the involution will be of the form $i\mapsto i^{-1}$). This has the following immediate consequence. If $q$ is a root of unity, let $e$ be the smallest non-zero positive integer such that $q^{2e}=1$; if $q$ is not a root of unity, set $e:=\infty$. As a direct consequence of our isomorphism theorem, cyclotomic quotients of affine Hecke algebras of type $D$ depend on $q$ only through $e$.

For the study of irreducible representations of the affine Hecke algebra of type $D$, it is noted in \cite{KM} that it is enough to consider certain cyclotomic quotients. Using our isomorphism, this can be formulated equivalently by saying that it is enough to consider quivers with vertex sets of the form $I_{\lambda}:=\{\lambda^{\epsilon}q^{2l}\ |\ \epsilon\in\{-1,1\}\,,\ \ l\in\Z\}$ for $\lambda\in K\setminus\{0\}$. However, for a complete study of the category of representations of $\hH(D_n)$, one needs a priori to consider arbitrary cyclotomic quotients, even if one can expect to reduce everything to the situation $l=1$, in the spirit of the Dipper--Mathas Morita equivalence for type $A$ \cite{DM}. This is the subject of\cite{PR}.

Assume in this paragraph that the vertex set of the quiver is of the form $I_{\lambda}:=\{\lambda^{\epsilon}q^{2l}\ |\ \epsilon\in\{-1,1\}\,,\ \ l\in\Z\}$ for $\lambda\in K\setminus\{0\}$.  Then one sees easily from the definition of the quiver Hecke algebra for type $D$ that it is enough to consider one of the following three situations for the choice of $\lambda$:
\[\mathbf{(a)}\ \lambda=1\ \qquad \mathbf{(b)}\ \lambda=q\ \qquad \mathbf{(c)}\ \lambda\notin\{\pm q^{\mathbb{Z}}\}\ .\]
Situation \textbf{(c)} corresponds to a quiver with two disconnected components exchanged by the involution $i\mapsto i^{-1}$. All $\lambda\notin\{\pm q^{\mathbb{Z}}\}$ then result in isomorphic algebras.

If $q$ is an odd root of unity then Case \textbf{(b)} reduces to Case \textbf{(a)}. If $q$ is not an odd root of unity then situation \textbf{(b)} corresponds to a quiver with a single connected component, stable under the involution and with no fixed point. Finally, Case \textbf{(a)} corresponds to a quiver with a single connected component which is stable under the involution and with at least one fixed point.

\paragraph{Notation.} We work with a field $K$ of characteristic different from 2, and we fix $p,q\in K\setminus\{0\}$ such that $p^2=1$.

\paragraph{Organisation of the paper.} In Section \ref{sec-def} we recall the definitions of the affine Hecke algebras $\hH(B_n)$,  $\hH(D_n)$. We also recall the automorphism of $\hH(B_n)$ which yields $\hH(D_n)$ as a fixed point subalgebra when $p^2=1$. In Section \ref{sec-defV} we give the definition of the algebras $V_{\bl}^{\beta}$, $W^\delta_{\bl}$ and an automorphism of $V_{\bl}^{\beta}$. We give a basis of the fixed point subalgebra of $V_{\bl}^{\beta}$ with respect to this automorphism and prove that this fixed point subalgebra is isomorphic to $W_{\bl}^{\beta}$. The definition of the cyclotomic quotients of $\hH(B_n)$ and of $V_{\bl}^n$ are given in Section \ref{sec-cyc} together with the proof that these quotients are isomorphic. In Section \ref{sec-cycD} we define the cyclotomic quotients of $\hH(D_n)$ and of $W_{\bl}^n$. We prove that idempotent subalgebras of cyclotomic quotients of $\hH(D_n)$ are isomorphic to cyclotomic quotients of $W_{\bl}^{\beta}$.

\section{Affine Hecke algebras of types B and D}\label{sec-def}

\subsection{Root datum}\label{subsection:root datum}

Let $n\geq 1$. Let $\{\eps_i\}_{i=1,\dots,n}$ be an orthonormal basis of the Euclidean space $\R^n$ with scalar product $(.,.)$ and let
\[L:=\bigoplus_{i=1}^n\Z\eps_i\ ,\ \ \ \ \alpha_0=2\eps_1\ \ \ \ \ \text{and}\ \ \ \ \ \alpha_i=\eps_{i+1}-\eps_i\,,\ i=1,\dots,n-1\ .\]
The set $\{\alpha_i\}_{i=0,\dots,n-1}$ is a set of simple roots for the root system $R=\{\pm2\eps_i,\pm\eps_i\pm\eps_j\}_{i,j=1,\dots,n}$ of type $B$. The lattice $L$ is the weight lattice of the root system $R$.

If $n\geq 2$, let
\[\alpha'_0=\eps_1+\eps_2\ \ \ \ \ \text{and}\ \ \ \ \ \alpha'_i=\eps_{i+1}-\eps_i\,,\ i=1,\dots,n-1\ .\]
The set $\{\alpha'_i\}_{i=0,\dots,n-1}$ is a set of simple roots for the root system $R'=\{\pm\eps_i\pm\eps_j,\,i\neq j\,\}_{i,j=1,\dots,n}$ of type $D$. 
\begin{remark}
The lattice $L$ is a lattice strictly included between the root lattice and the weight lattice of the root system $R'$.
\hfill$\triangle$
\end{remark}

For a root $\alpha\in R$, we identify its coroot $\alpha^{\vee}$ with the following element of $\text{Hom}_{\Z}(L,\Z)$:
\[\alpha^{\vee}\ :\ x\mapsto 2\frac{(\alpha,x)}{(\alpha,\alpha)}\ .\]
To each root $\alpha\in R$ is associated the element $r_{\alpha}\in\text{End}_{\Z}(L)$ defined by
\begin{equation}\label{act-W0}
r_{\alpha}(x):=x-\alpha^{\vee}(x)\,\alpha\ \ \ \ \ \ \text{for any $x\in L$.}
\end{equation}
The group generated by $r_{\alpha}$, $\alpha\in R$, is identified with the Weyl group $W(B_n)$ associated to $R$. Let $r_0,r_1,\dots,r_{n-1}$ be the elements of $W(B_n)$ corresponding to the simple roots $\alpha_0,\alpha_1,\dots,\alpha_{n-1}$.

The group generated by $r_{\alpha}$, $\alpha\in R'$, is identified with the Weyl group $W(D_n)$ associated to $R'$. Let $s_0,s_1,\dots,s_{n-1}$ be the elements of $W(D_n)$ corresponding to the simple roots $\alpha'_0,\alpha'_1,\dots,\alpha'_{n-1}$.

It is a classical fact that the following map
\[s_0\mapsto r_0r_1r_0\,,\ \ \ \ s_i\mapsto r_i\,,\ i=1,\dots,n-1,\]
gives an isomorphism between $W(D_n)$ and the subgroup of $W(B_n)$ generated by $r_0r_1r_0,\, r_1,\dots,r_{n-1}$. Equivalently, the subgroup $W(D_n)$ can also be described as the kernel of the character of $W(B_n)$ defined by $r_0\mapsto -1$ and $r_i\mapsto 1$ for $i=1,\dots,n-1$.

\subsection{Affine Hecke algebra of type $B$}

We set $q_0:=p$ and $q_i:=q$ for $i=1,\dots,n-1$ (only in this subsection do we not need the assumption that $p^2=1$). We denote by $\hH(B_n)$ the affine Hecke algebra corresponding to the root datum of type $B$ associated to the weight lattice $L$. The algebra $\hH(B_n)$ is the $K$-algebra generated by elements 
$$g_0,g_1,\dots,g_{n-1}\ \ \ \ \text{and}\ \ \ \ X^x,\ \ x\in L\ .$$
Defining relations are $X^0=1$, $X^xX^{x'}=X^{x+x'}$ for any $x,x'\in L$, and 
\begin{empheq}{alignat=3}
&g_i^2=(q_i-q_i^{-1})g_i+1 & \qquad & \text{for}\ i\in\{0,\dots,n-1\}\,,\label{rel-H1}\\
&g_0g_1g_0g_1=g_1g_0g_1g_0\,, && \label{rel-H2}\\
&g_ig_{i+1}g_i=g_{i+1}g_ig_{i+1} && \text{for $i\in\{1,\dots,n-2\}$}\,, \label{rel-H3}\\
&g_ig_j=g_jg_i && \text{for $i,j\in\{0,\dots,n-1\}$ such that $|i-j|>1$}\,.
\label{rel-H4}
\end{empheq}

together with
\begin{equation}\label{rel-Lu}
g_iX^x-X^{r_i(x)}g_i=(q_i-q_i^{-1})\frac{X^x-X^{r_i(x)}}{1-X^{-\alpha_i}}\ ,
\end{equation}
for any $x\in L$ and $i=0,1,\dots,n-1$. Note that the right hand side of (\ref{rel-Lu}) belongs to the subalgebra generated by elements $X^y$, $y\in L$, since there exists $k\in\mathbb{Z}$ such that $r_i(x)=x-k\alpha_i$.

Let $X_i:=X^{\eps_i}$ for $i=1,\dots,n$, so that $\{X^x\}_{x\in L}=\{X_1^{a_1}\dots X_n^{a_n}\}_{a_1,\dots,a_n\in\Z}$. Then the algebra $\hH(B_n)$ is generated by elements 
$$g_0,g_1,\dots,g_{n-1},X_1^{\pm1},\dots,X_n^{\pm1}\,,$$
and it is easy to check that a set of defining relations is (\ref{rel-H1})--(\ref{rel-H4}) together with
\begin{empheq}{alignat=3}
&X_iX_j=X_jX_i & \qquad &\text{for}\ i,j\in\{1,\dots,n\}\,,\label{rel-H5}\\
&g_0X_1^{-1}g_0=X_1\,, && \label{rel-H6}\\
&g_iX_ig_i=X_{i+1} && \text{for $i\in\{1,\dots,n-1\}$}\,, \label{rel-H7}\\
&g_iX_j=X_jg_i && \text{for $i\in\{0,\dots,n-1\}$ and $j\in\{1,\dots,n\}$ such that $j\neq i,i+1$}\,. \label{rel-H8}
\end{empheq}
For any element $w\in W(B_n)$, let $w=r_{a_1}\dots r_{a_k}$ be a reduced expression for $w$ in terms of the generators $r_0,\dots,r_{n-1}$, and define $g_w:=g_{a_1}\dots g_{a_k}\in \hH(B_n)$. This definition does not depend on the reduced expression for $w$. It is a standard fact that the following sets of elements are $K$-bases of $\hH(B_n)$:
\begin{equation}\label{base-aff}
\{X^xg_w\}_{x\in L,\,w\in W(B_n)}\ \ \ \ \text{and}\ \ \ \ \{g_wX^x\}_{x\in L,\,w\in W(B_n)} .
\end{equation}
\begin{remark}\label{braidB}
The Weyl group $W(B_n)$ is generated by $r_0,r_1,\dots,r_{n-1}$ and a set of defining relations is (\ref{rel-H1})--(\ref{rel-H4}) with $g_i$ replaced by $r_i$ and $q_i$ (that is, $p$ and $q$) replaced by 1. Relations (\ref{rel-H2})--(\ref{rel-H4}) are the braid relations of type $B$.
\hfill$\triangle$
\end{remark}

\subsection{Affine Hecke algebra of type $D$}

Let $n\geq 2$. We denote by $\hH(D_n)$ the affine Hecke algebra corresponding to the root datum of type $D$ associated to the lattice $L$. The algebra $\hH(D_n)$ is the $K$-algebra generated by elements 
$$T_0,T_1,\dots,T_{n-1}\ \ \ \ \text{and}\ \ \ \ X^x,\ \ x\in L\ .$$
Defining relations are $X^0=1$, $X^xX^{x'}=X^{x+x'}$ for any $x,x'\in L$, and
\begin{empheq}{alignat=3}
& T_i^2=(q-q^{-1})T_i+1  & \qquad & \text{for}\ i\in\{0,\dots,n-1\}\,,\label{rel-HD1}\\[0.2em]
& T_0T_2T_0=T_2T_0T_2\,, &&   \label{rel-HD2}\\[0.2em]
& T_0T_j=T_{j}T_0 &&  \text{for $j\in\{1,\dots,n-1\}\backslash\{2\}$}\,,\label{rel-HD3}\\[0.2em]
& T_iT_{i+1}T_i=T_{i+1}T_iT_{i+1} &&  \text{for $i\in\{1,\dots,n-2\}$}\,,\label{rel-HD4}\\[0.2em]
& T_iT_j=T_jT_i && \text{for $i,j\in\{1,\dots,n-1\}$ such that $|i-j|>1$}\,.\label{rel-HD5}
\end{empheq} 
together with
\begin{equation}\label{rel-LuD}
T_iX^x-X^{s_i(x)}T_i=(q-q^{-1})\frac{X^x-X^{s_i(x)}}{1-X^{-\alpha'_i}}\ ,
\end{equation}
for any $x\in L$ and $i=0,1,\dots,n-1$. Note that the right hand side of (\ref{rel-Lu}) belongs to the subalgebra generated by elements $X^y$, $y\in L$, since there exists $k\in\mathbb{Z}$ such that $s_i(x)=x-k\alpha'_i$.

Again, let $X_i:=X^{\eps_i}$ for $i=1,\dots,n$, so that $\{X^x\}_{x\in L}=\{X_1^{a_1}\dots X_n^{a_n}\}_{a_1,\dots,a_n\in\Z}$. Then the algebra $\hH(D_n)$ is generated by elements 
$$T_0,T_1,\dots,T_{n-1},X_1^{\pm1},\dots,X_n^{\pm1}\,,$$
and it is easy to check that a set of defining relations is (\ref{rel-HD1})--(\ref{rel-HD5}) together with
\begin{empheq}{alignat=3}
& X_iX_j=X_jX_i & \qquad & \text{for}\ i,j\in\{1,\dots,n\}\,,\\[0.2em]
& T_0X_1^{-1}T_0=X_2\,, && \\[0.2em]
& T_0X_j=X_jT_0 && \text{for $j\in\{3,\dots,n-1\}$}\,,\\[0.2em]
& T_iX_iT_i=X_{i+1} && \text{for $i\in\{1,\dots,n-1\}$}\,,\\[0.2em]
& T_iX_j=X_jT_i && \text{for $i\in\{1,\dots,n-1\}$ and $j\in\{1,\dots,n\}$ such that $j\neq i,i+1$}\,.
\end{empheq}
For any element $w\in W(D_n)$, let $w=s_{a_1}\dots s_{a_k}$ be a reduced expression for $w$ in terms of the generators $s_0,\dots,s_{n-1}$, and define $T_w:=T_{a_1}\dots T_{a_k}\in \hH(D_n)$. This definition does not depend on the reduced expression for $w$. It is a standard fact that the following sets of elements are $K$-bases of $\hH(D_n)$:
\begin{equation}\label{base-affD}
\{X^xT_w\}_{x\in L,\,w\in W(D_n)}\ \ \ \ \text{and}\ \ \ \ \{T_wX^x\}_{x\in L,\,w\in W(D_n)} .
\end{equation}
\begin{remark}\label{braidD}
The Weyl group $W(D_n)$ is generated by $s_0,s_1,\dots,s_{n-1}$ and a set of defining relations is (\ref{rel-HD1})--(\ref{rel-HD5}) with $T_i$ replaced by $s_i$ and $q$ replaced by 1. Relations (\ref{rel-HD2})--(\ref{rel-HD5}) are the braid relations of type D.
\hfill$\triangle$
\end{remark}

\paragraph{Fixed point subalgebra.} Recall that $p^2=1$. With this assumption, the following map
\[T_0\mapsto g_0g_1g_0\,,\ \ \ \ T_i\mapsto g_i\,,\ i=1,\dots,n-1,\ \ \ \ \ X^x\mapsto X^x\ ,\]
provides an isomorphism between $\hH(D_n)$ and the subalgebra of $\hH(B_n)$ generated by $g_0g_1g_0,g_1,\dots,g_{n-1}$ and $X^x$ (briefly, this standard statement can be checked by a direct verification of the homomorphism property and a comparison of the bases (\ref{base-aff}) and (\ref{base-affD})). The subalgebra of $\hH(B_n)$ isomorphic to $\hH(D_n)$ is the linear span of the following elements:
\[\{X^xg_w\}_{x\in L,\,w\in W(D_n)}\ ,\]
where we identified $W(D_n)$ as a subgroup of $W(B_n)$ by $s_0\mapsto r_0r_1r_0$ and $s_i\mapsto r_i$ for $i=1,\dots,n-1$.

Let $\eta$ be the involutive automorphism of $\hH(B_n)$ defined by
\begin{equation}\label{eta}
\eta\ :\ \ \ \ g_0\mapsto -g_0\,,\ \ \ \ g_i\mapsto g_i\,,\ i=1,\dots,n-1,\ \ \ \ \ \ X^x\mapsto X^x\ .
\end{equation}
We denote $\hH(B_n)^{\eta}:=\{x\in\hH(B_n)\ |\ \eta(x)=x\}$. Then it is straightforward to check that the subalgebra isomorphic to $\hH(D_n)$ inside $\hH(B_n)$ is $\hH(B_n)^{\eta}$ (this follows also immediately from the corresponding standard statement for finite Hecke algebras, which is a particular example of the description of cyclotomic Hecke algebras of $G(d,p,n)$ as fixed point subalgebras of the cyclotomic Hecke algebras of type $G(d,1,n)$ \cite{RR}; here we take $d=p=2$).

\section{Generalizations of quiver Hecke algebras}\label{sec-defV}

For $\lambda\in K\setminus\{0\}$, we set $I_{\lambda}:=\{\lambda^{\epsilon}q^{2l}\ |\ \epsilon\in\{-1,1\}\,,\ \ l\in\Z\}$ for $\lambda\in K\setminus\{0\}$. Fix an $l$-tuple $\bl=(\lambda_1,\dots,\lambda_l)$ of elements of $K\setminus\{0\}$ such that we have $I_{\lambda_a}\cap I_{\lambda_b}=\emptyset$ if $a\neq b$. Let
\[S_{\bl}:=I_{\lambda_1}\cup\dots\cup I_{\lambda_l}\ .\]
Let $\Gamma_{\bl}$ be the quiver whose vertices are indexed by $S_{\bl}$ (and we will identify a vertex with its index) and whose arrows are given as follows. For every $i\in S_{\bl}$ there is an arrow with origin $i$ and target $q^2i$.

\vskip .2cm
We will use the following notation, for $i,j\in S_{\bl}$:
\begin{equation*}
\begin{split}
i \nleftrightarrow j\quad &\textrm{ when } i\neq j \text{ and $j\notin\{q^2i,q^{-2}i\}$\,,} \\
i \rightarrow j \quad &\textrm{ when $j=q^2i\neq q^{-2}i$\,,}  \\
i \leftarrow j \quad &\textrm{ when $j=q^{-2}i\neq q^{2}i$\,,}  \\
i \leftrightarrow j \quad &\textrm{ when $j=q^{2}i=q^{-2}i$\,.}
\end{split}
\end{equation*}
Note that $i\neq j$ in all these cases (recall that $q^2\neq 1$) and that the fourth case means that $q^2=-1$ and $i=-j$.

We denote by $|i \rightarrow j|$ the number of arrows in $\Gamma_{\bl}$ which have origin $i$ and target $j$.
 
\subsection{The generalisation of quiver Hecke algebra for type $B$ with $p^2=1$}\label{sec-defV-psq=1}

The formulas
\begin{equation*}
\begin{split}
r_0\cdot(i_1,\ldots,i_n) &=(i_1^{-1},i_2,\ldots,i_n) \\
r_k\cdot(\ldots,i_k,i_{k+1},\ldots) &=(\ldots,i_{k+1},i_k,\ldots)\ \ \quad\text{for $k=1,\dots,n-1$,}
\end{split}
\end{equation*}
provide an action of the Weyl group $W(B_n)$ on $(S_{\bl})^n$. 
Let $\beta$ be a finite subset of $(S_{\bl})^n$ stable under this action of $W(B_n)$. In other words, let $\beta$ be a finite union of orbits in $(S_{\bl})^n$ for this action of $W(B_n)$.
\begin{definition} \label{definition:vv algebra}
The algebra $V_{\bl}^{\beta}$ is the unital $K$-algebra generated by elements 
\begin{equation*}
\{ \psi_a \}_{0\leq a\leq n-1} \cup \{ y_j \}_{1\leq j\leq n} \cup \{ \idemp \}_{\textbf{i} \in \beta}
\end{equation*}
which satisfy the following defining relations. 
\begin{align}
\sum_{\textbf{i} \in \beta} \idmep & = 1\,,\ \ \quad \idmep\idmepj = \delta_{\ti,\textbf{j}}\idmep\,,\ \ \ \forall\ti,\textbf{j}\in \beta\,, \label{Rel:V1} \\ 
y_iy_j&=y_jy_i\,,\ \ \quad y_{i}\idmep=\idmep y_{i}\,,\ \ \ \ \forall i,j\in\{1,\ldots,n\} \text{ and } \forall\ti\in\beta\,,\label{Rel:V2}
\end{align}
and, for $a,b\in\{0,1,\dots,n-1\}$ with $b\neq 0$, for $j\in\{1,\dots,n\}$, and for $\ti=(i_1,\dots,i_n)\in\beta$,
\begin{align}
\psi_{a}\idmep &= e(r_{a}(\ti))\psi_a\,,\label{Rel:V3}\\
(\psi_b y_j - y_{\pi_b(j)}\psi_b)\idmep&=\left\{
\begin{array}{l l l}
			-\idmep & \quad \textrm{ if } j=b,\ i_b=i_{b+1}\,,\\
			\idmep & \quad \textrm{ if } j=b+1,\ i_b=i_{b+1}\,,\\
			0 & \quad \textrm{ else,}
\end{array} \right. \label{Rel:V4}
\end{align}
where $\pi_b$ is the transposition of $b$ and $b+1$ acting on $\{1,\dots,n\}$;
\begin{align}
\psi_a\psi_b&=\psi_b\psi_a \hspace{0.3em} \textrm{ if } |a-b|>1\,, \label{Rel:V5} \\
\psi_b^2\idmep &= \left\{
\begin{array}{l l l l l}
0 & \quad \textrm{if } i_b=i_{b+1}\,, \\[0.2em]
\idmep & \quad \textrm{if } i_{b} \nleftrightarrow i_{b+1}\,, \\[0.2em]
(y_{b+1} - y_b)\idmep & \quad \textrm{if } i_b \rightarrow i_{b+1}\,,\\[0.2em]
(y_b - y_{b+1})\idmep & \quad \textrm{if } i_b \leftarrow i_{b+1}\,, \\[0.2em]
(y_{b+1}-y_b)(y_b-y_{b+1})\idemp & \quad \textrm{if } i_b \leftrightarrow i_{b+1}\,, 
\end{array} \right. \label{Rel:V6}\\
(\psi_{b}\psi_{b+1}\psi_{b} - \psi_{b+1}\psi_{b}\psi_{b+1})\idemp &= \left\{
\begin{array}{l l l l}
\idmep & \quad i_b=i_{b+2}\rightarrow i_{b+1}\,\\[0.2em]
-\idmep & \quad i_b=i_{b+2}\leftarrow i_{b+1}\, \\[0.2em]
(y_{b+2}-2y_{b+1}+y_b)\idmep & \quad i_b=i_{b+2} \leftrightarrow i_{b+1}\,,\\[0.2em]
0 & \quad \text{else\,,}\,
\end{array} \right. \label{Rel:V7}
\end{align}
\begin{align}
\psi_0 y_1 + y_1\psi_0 & =0\,,\label{Rel:V8}\\
\psi_0 y_j &= y_j \psi_0 \quad\textrm{ if } j > 1\,, \label{Rel:V9}\\
\psi_0^2&=1\,, \label{Rel:V10}\\
((\psi_0 \psi_1)^2 - (\psi_1\psi_0)^2)&=0\,.
\label{Rel:V11}
\end{align}
\end{definition}

It is straightforward to check that the algebra $V_{\bl}^{\beta}$ admits a $\Z$-grading given as follows,
\begin{equation*}
\begin{split}
&\textrm{deg}(\idmep)=0\,, \\ 
&\textrm{deg}(y_j)=2\,, \\
&\textrm{deg}(\psi_0)=0\,,\\
&\textrm{deg}(\psi_b \idmep)=\left\{
\begin{array}{l l l}	
|i_b \rightarrow i_{b+1}|+|i_b \leftarrow i_{b+1}| & \quad \textrm{ if  } i_b \neq i_{b+1}\,,\\
-2 & \quad \textrm{ if  } 	i_b = i_{b+1}.	
\end{array} \right. 
\end{split}
\end{equation*}

Let $\beta=\beta_1\cup\beta_2\dots\cup \beta_k$ be the union of orbits $\beta_1,\dots,\beta_k$ in $(S_{\bl})^n$ for the action of $W(B_n)$. For any $a\in\{1,\dots,k\}$, the element
\[e(\beta_a):=\sum_{\ti\in\beta_a}e(\ti)\ \]
is a central idempotent in $V_{\bl}^{\beta}$ using Relations (\ref{Rel:V1})--(\ref{Rel:V3}). Moreover, again using (\ref{Rel:V1}), we have that
\[e(\beta_a)e(\beta_b)=\delta_{a,b}e(\beta_a)\ \ \ \quad\text{and}\quad\ \ \ e(\beta_1)+\dots+e(\beta_k)=1\ .\]
Therefore the algebra $V_{\bl}^{\beta}$ decomposes as a direct sum of ideals:
\[V_{\bl}^{\beta}=e(\beta_1)V_{\bl}^{\beta}\oplus\dots\oplus e(\beta_k)V_{\bl}^{\beta}\ .\]
Moreover, for $a\in\{1,\dots,k\}$, the subalgebra $e(\beta_a)V_{\bl}^{\beta}$ is isomorphic to the quotient of $V_{\bl}^{\beta}$ by the relation $e(\beta_a)=1$. Multiplying by different idempotents $e(\ti)$, we see that this relation in $V_{\bl}^{\beta}$ is equivalent to the set of relations $e(\ti)=0$ for all $\ti\notin\beta_a$. A direct inspection of the defining relations (\ref{Rel:V1})--(\ref{Rel:V11}) shows finally that the subalgebra $e(\beta_a)V_{\bl}^{\beta}$ is isomorphic to $V_{\bl}^{\beta_a}$, and we conclude that:
\begin{equation}\label{dec-V}
V_{\bl}^{\beta}\cong V_{\bl}^{\beta_1}\oplus\dots\oplus V_{\bl}^{\beta_k}\ .
\end{equation}

\paragraph{A polynomial representation of $V^\beta_{\bl}$.}\label{sec-polyrep} There is an action of $W(B_n)$ by algebra automorphisms on a polynomial algebra $K[Y_1,\dots,Y_n]$ given by:
\begin{equation}\label{action}
\begin{split}
r_kY_l & =Y_{\pi_k(l)} \quad \textrm{for }1 \leq k < n, \\
r_0Y_l & = \left\lbrace
\begin{array}{ll}
-Y_l & \textrm{ if }l=1 \\
Y_l & \textrm{ if }l\neq 1
\end{array}
\right.
\end{split}
\end{equation}
where $\pi_k$ is the transposition of $k$ and $k+1$ acting on $\{1,\dots,n\}$.

We define the following direct sums of polynomial algebras: 
\begin{equation*}
\textrm{Pol}_\beta:=\bigoplus_{\ti \in \beta} \textrm{Pol}_{\ti}, \textrm{ where $\textrm{Pol}_{\ti}:= K[Y_1, \ldots, Y_n]$ for each $\ti\in\beta$},
\end{equation*}
and, for $\ti\in\beta$, we denote $e'(\ti)$ the idempotent of $\text{Pol}_{\beta}$ corresponding to the summand $\text{Pol}_{\ti}$ in the direct sum above (in other words, $e'(\ti)$ is 1 in the summand $\textrm{Pol}_{\ti}$ and $0$ in every other).
\\
Define a map from the set of generators of $V^\beta_{\bl}$ to $\text{End}_K(\text{Pol}_\beta)$ as follows
(where $f\in K[Y_1,\dots,Y_n]$ and $a\cdot -$ denotes multiplication on the left by $a$):
\begin{equation*}
\begin{split}
\idmep &\mapsto e'(\ti)\cdot - \\
y_k &\mapsto Y_k\cdot - \\
\psi_k &\mapsto \left( fe'(\ti) \mapsto 
\left\lbrace
\begin{array}{ll}
\displaystyle\frac{r_kf-f}{Y_k-Y_{k+1}}e'(\ti) & \textrm{ if } i_k=i_{k+1} \\ \\
(r_kf) e'\bigl(r_k(\ti)\bigr) & \textrm{ if } i_k\rightarrow i_{k+1} \textrm{ or }i_k \nleftrightarrow i_{k+1} \\ \\
(Y_{k+1}-Y_k)(r_kf)e'\bigl(r_k(\ti)\bigr) & \textrm{ if } i_k\leftarrow i_{k+1} \textrm{ or } i\leftrightarrow i_{k+1}
\end{array}
\right. \right) \\ \\
\psi_0 &\mapsto \Big( \ \  fe'(\ti)\ \mapsto\ (r_0f)e'\bigl(r_0(\ti)\bigr) \ \ \Big).
\end{split}
\end{equation*}
We claim that we can extend this map to an algebra morphism
\begin{equation*}
R: V^\beta_{\bl} \longrightarrow \textrm{End}_K(\textrm{Pol}_\beta)
\end{equation*}

We must check the defining relations of $V^\beta_{\bl}$ are preserved under $R$. Relations (3.1) - (3.7) have been verified previously (and are independent of whether or not some $i^2_k=1$), see for example \cite{VV}, Proposition 7.4 or \cite{KL}, Proposition 2.3. It remains to check Relations (\ref{Rel:V8}) - (\ref{Rel:V11}). 
\\
\\
Relations (\ref{Rel:V8}) - (\ref{Rel:V10}) are immediately seen to be preserved. For Relation (\ref{Rel:V11}) we define, for any $\ti \in \beta$,
\begin{equation*}
\begin{array}{lll}
h({\ti}):= \left\lbrace \begin{array}{ll}
-1 & \textrm{ if } i_1=i_{2} \\
|i_{2} \rightarrow i_1| & \textrm{ if }i_1\neq i_{2}, 
\end{array} \right.
\end{array}
\end{equation*}
We first note that $R(\psi_1)$ acts on $\textrm{Pol}_{\ti}$ by
\begin{equation*}
R(\psi_1)\bigl(fe'(\ti)\bigr)=(Y_2-Y_1)^{h(\ti)}(r_1-\delta_{i_1,i_2})e'\bigl(r_1(\ti)\bigr)\ ,
\end{equation*}
and therefore, we have that the action of $R(\psi_1)R(\psi_0)R(\psi_1)R(\psi_0)$ on $fe'(\ti)$ is (here and below we omit the composition symbol)
\begin{equation*}
\begin{split}
&(Y_2-Y_1)^{h\bigl(r_0r_1r_0(\ti)\bigr)}(r_1-\delta_{i_2^{-1},i_1^{-1}})(r_0)(Y_2-Y_1)^{h\bigl(r_0(\ti)\bigr)}(r_1-\delta_{i_1^{-1},i_2})(r_0)(f)e'\bigl(r_1r_0r_1r_0(\ti)\bigr) \\
=&(Y_2-Y_1)^{h\bigl(r_0r_1r_0(\ti)\bigr)}(Y_1+Y_2)^{h\bigl(s_0(\ti)\bigr)}A(f)e'\bigl(r_1r_0r_1r_0(\ti)\bigr)
\end{split}
\end{equation*}
where $A=(r_1-\delta_{i_2^{-1},i_1^{-1}})(r_0)(r_1-\delta_{i_1^{-1},i_2})(r_0)$. Similarly, the action of $R(\psi_0)R(\psi_1)R(\psi_0)R(\psi_1)$ on $fe'(\ti)$ is given by
\begin{equation*}
\begin{split}
&(r_0)(Y_2-Y_1)^{h\bigl(r_0r_1(\ti)\bigr)}(r_1-\delta_{i_2^{-1},i_1})(r_0)(Y_2-Y_1)^{h(\ti)}(r_1-\delta_{i_1,i_2})(f)e'\bigl(r_0r_1r_0r_1(\ti)\bigr) \\
=&(Y_1+Y_2)^{h\bigl(r_0r_1(\ti)\bigr)}(Y_2-Y_1)^{h(\ti)}B(f)e'\bigl(r_0r_1r_0r_1(\ti)\bigr)
\end{split}
\end{equation*}
where $B=(r_0)(r_1-\delta_{i_2^{-1},i_1})(r_0)(r_1-\delta_{i_1,i_2})$.
\\
\\
Next, we note that 
\begin{equation*}
h\bigl(r_0r_1r_0(\ti)\bigr)=h(\ti) \textrm{ and } h\bigl(r_0r_1(\ti)\bigr)=h\bigl(r_0(\ti)\bigr).
\end{equation*}
Finally, to complete the verification of this relation, we check that $A-B=0$ which is straightforward. Therefore we can extend $R$ to a morphism of algebras. In other words, we have a polynomial representation of $V^\beta_{\bl}$.

\paragraph{Bases of $V^\beta_{\bl}$.} For any $w\in W(B_n)$, fix a reduced expression $w=r_{i_1}\dots r_{i_l}$ in terms of the generators $r_0,r_1,\dots,r_{n-1}$ of  $W(B_n)$, and set $\psi_w:=\psi_{i_1}\dots\psi_{i_l}$. Note that $\psi_w$ depends on the chosen reduced expression for $w$.

\begin{theorem}[Basis Theorem]\label{basis-theorem1}
The set 
\begin{equation*}
\mathcal{B}:=\{ y_1^{m_1}\cdots y_n^{m_n}\psi_w\idmep \mid w \in W(B_n), (m_1,\ldots,m_n) \in (\mathbb{Z}_{\geq 0})^n, \textbf{i}\in \beta \}
\end{equation*}
is a $K$-basis for $V^\beta_{\bl}$.
\end{theorem}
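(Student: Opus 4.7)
The plan is to follow the standard two-step strategy for basis theorems of quiver Hecke type algebras: first prove $\mathcal{B}$ spans $V^\beta_{\bl}$, then prove linear independence using the polynomial representation $R$ constructed immediately above the theorem.

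For spanning, I would fix once and for all a reduced expression for each $w\in W(B_n)$, defining $\psi_w$ accordingly. Starting from an arbitrary word in the generators, I would use Relations (\ref{Rel:V1})--(\ref{Rel:V3}) to collect idempotents on the right (a single $e(\ti)$ with $\ti\in\beta$), then use Relations (\ref{Rel:V4}), (\ref{Rel:V8}), (\ref{Rel:V9}) to move all $y_j$'s to the left of all $\psi_a$'s. Each application of these relations replaces $\psi_b y_j \idemp$ by $y_{\pi_b(j)}\psi_b \idemp$ plus a correction which either vanishes or equals $\pm\idemp$, hence has strictly fewer $\psi$-letters. The remaining task is to reduce an arbitrary product $\psi_{i_1}\cdots\psi_{i_l}$ to a single $\psi_w$ modulo terms with strictly fewer $\psi$-letters (and polynomials in the $y_j$'s on the left). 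By Matsumoto's theorem for $W(B_n)$ (see Remark \ref{braidB}), any two reduced expressions for $w$ are connected by braid moves of type $B$, and these are exactly provided modulo $\psi$-shorter corrections by Relations (\ref{Rel:V5}), (\ref{Rel:V7}), (\ref{Rel:V11}); non-reduced words are shortened via the quadratic relations (\ref{Rel:V6}) and (\ref{Rel:V10}). The whole procedure terminates by induction on $\psi$-length (combined with total degree to handle the polynomial corrections from (\ref{Rel:V4})).

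For linear independence, I would use the polynomial representation $R:V^\beta_{\bl}\to\text{End}_K(\text{Pol}_\beta)$. Suppose $\sum c_{\underline{m},w,\ti}\, y_1^{m_1}\cdots y_n^{m_n}\psi_w\idemp=0$ in $V^\beta_{\bl}$; I would apply $R$ and evaluate on the generator $1\cdot e'(\ti)$ of $\text{Pol}_{\ti}$ for each $\ti\in\beta$ separately (permissible because $\idmep$ acts as the projector to $\text{Pol}_{\ti}$). For fixed $\ti$, the element $R(\psi_w)(1\cdot e'(\ti))$ lands in $\text{Pol}_{w(\ti)}$, so one may further decompose according to $w(\ti)$. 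Within each summand $\text{Pol}_{w(\ti)}$, the polynomials $R(\psi_w)(1\cdot e'(\ti))$ for different reduced-length representatives mapping $\ti$ to the same target can be distinguished by a leading-term analysis in $Y_1,\dots,Y_n$: the formulas defining $R(\psi_a)$ on $\text{Pol}_{\ti}$ are either divided differences (when $i_a=i_{a+1}$), multiplications by $r_a$ followed by $(Y_{a+1}-Y_a)$-type factors, or the sign-change $r_0$, so an induction on $\ell(w)$ shows that the top-degree part of $R(\psi_w)(1\cdot e'(\ti))$ is a specific nonzero polynomial (a product of root factors) that depends injectively on $w$ in the same manner as in the nil-Hecke/KLR computations of \cite{KL,VV}. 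Multiplying by $y_1^{m_1}\cdots y_n^{m_n}$ on the left corresponds to multiplying by $Y_1^{m_1}\cdots Y_n^{m_n}$, so distinct monomials contribute distinct leading terms, forcing all $c_{\underline{m},w,\ti}=0$.

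The main obstacle will be the straightening at type $B$ nodes, i.e.\ controlling the corrections introduced when applying the mixed braid relation (\ref{Rel:V11}) and the interaction of $\psi_0$ with $y_1$ via (\ref{Rel:V8}) and the quadratic relation (\ref{Rel:V10}). In particular, showing that (\ref{Rel:V11}) suffices to pass between the two reduced expressions on either side of the type $B$ braid while respecting the filtration by $\psi$-length requires a careful bookkeeping: one must expand $(\psi_0\psi_1)^2-(\psi_1\psi_0)^2$ and use (\ref{Rel:V8})--(\ref{Rel:V10}) to extract the needed lower-order corrections. In practice most of this bookkeeping has already been carried out in the $p^2\neq 1$ analogue treated in \cite{PW}, so I would invoke those arguments verbatim wherever the relations coincide and only redo the computations at the vertex $0$ where (\ref{Rel:V10}) and (\ref{Rel:V11}) replace the more complicated relations of \cite{PW}; the fact that $p^2=1$ in fact makes these relations simpler, so the present case is a genuine simplification of that analysis.
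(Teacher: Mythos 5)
Your spanning argument is essentially the same as the paper's: fix reduced expressions, move idempotents right and $y$'s left via (\ref{Rel:V1})--(\ref{Rel:V4}), (\ref{Rel:V8}), (\ref{Rel:V9}), and reduce $\psi$-words using Matsumoto (braid moves via (\ref{Rel:V5}), (\ref{Rel:V7}), (\ref{Rel:V11})) and the quadratic relations (\ref{Rel:V6}), (\ref{Rel:V10}), inducting on $\psi$-length; this matches the paper's Step 1.

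Your linear independence argument, however, has a genuine gap. You propose to apply the polynomial representation $R$ to the single vector $1\cdot e'(\ti)$ and then do a leading-term analysis of the polynomials $R(\psi_w)\bigl(1\cdot e'(\ti)\bigr)$. This cannot work as stated: whenever $i_k=i_{k+1}$ the operator $R(\psi_k)$ is a divided-difference operator, so $R(\psi_k)\bigl(1\cdot e'(\ti)\bigr)=\dfrac{r_k 1-1}{Y_k-Y_{k+1}}e'(\ti)=0$, and more generally $R(\psi_w)\bigl(1\cdot e'(\ti)\bigr)$ vanishes identically for many nontrivial $w$ with $\psi_w e(\ti)\neq 0$ (this is already the case in the nil-Hecke situation). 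Consequently every term $y_1^{m_1}\cdots y_n^{m_n}\psi_w\idemp$ built from such $w$ has zero image under your test vector, and the leading-term argument collapses. The paper avoids this precisely by acting on an arbitrary $f\in K[Y_1,\dots,Y_n]$: it establishes the Bruhat-triangular expansion $R(\psi_w)\bigl(fe'(\ti)\bigr)=\bigl(P_{w,\ti}\cdot wf+\sum_{w'<w}Q_{w',w,\ti}\cdot w'f\bigr)e'\bigl(w(\ti)\bigr)$ with $P_{w,\ti}$ a \emph{nonzero rational function} in $Y_1,\dots,Y_n$ (nonzero even when $R(\psi_w)(1)=0$), decomposes by the subspaces $e(\tj)V^\beta_{\bl}e(\ti)$, and then invokes Dedekind's lemma on the $K(Y_1,\dots,Y_n)$-linear independence of the maps $\{f\mapsto wf\}_{w\in W(B_n)}$ to force all coefficients to vanish. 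To repair your argument you would need to replace the single evaluation at $1\cdot e'(\ti)$ with this operator-level triangularity statement (or equivalently evaluate at a family of $f$'s sufficient to separate the field automorphisms), and supply the proof that $P_{w,\ti}\neq 0$ by induction on $\ell(w)$.
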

After a direct inspection of the defining relations, an easy argument using the algebra anti-automorphism of $V^\beta_{\bl}$ sending each generator to itself shows that we could also put the $y_1^{m_1}\cdots y_n^{m_n}$ to the right of $\psi_w$.
\begin{proof}
We adapt some of the arguments used in \cite[Proposition 7.5]{VV}. Due to the conciseness of the proof there, we find it useful to give more details here.

From the defining relations of $V^\beta_{\bl}$, for any product of the generators we can always move the $\psi_j$'s past the $e(\ti)$'s and $y_k$'s. So let us consider a word $\psi_{i_1}\dots\psi_{i_l}$. If $r_{i_1}\dots r_{i_l}$ is a reduced expression, we can use the braid relations (see Section \ref{sec-def}) to transform $r_{i_1}\dots r_{i_l}$ into the reduced expression chosen for $w=r_{i_1}\dots r_{i_l}\in W(B_n)$. In the algebra $V^\beta_{\bl}$, this results in $\psi_{i_1}\dots\psi_{i_l}=\psi_w+...$, where the correcting terms are $K[y_1,\dots,y_n]$-linear combinations of words with strictly less than $l$ letters in $\psi_0,\dots,\psi_{n-1}$ multiplied by idempotents $e(\ti)$. Then we use induction on $l$ to treat these words.

If $r_{i_1}\dots r_{i_l}$ is not reduced, we can use the braid relations to make an $r_i^2$ appear in the word. In the algebra $V^\beta_{\bl}$, this results in $\psi_{i_1}\dots\psi_{i_l}=\psi_{i'_1}\dots\psi_i^2\dots\psi_{i'_{l}}+...$, where the correcting terms are as in the preceding case. Using the defining relations for $\psi_i^2$, we again find ourselves in a situation where we can use the induction hypothesis on $l$.

\vskip .2cm
So the set $\mathcal{B}$ is a spanning set for $V^\beta_{\bl}$ and it remains to show that the elements in $\mathcal{B}$ are linearly independent. 

First we note that, for $w\in W(B_n)$, the action $f\mapsto wf$ on $K[Y_1,\dots,Y_n]$ extends to a field automorphism of $K(Y_1,\dots,Y_n)$. So by Dedekind's Lemma, the set of maps $\{f\mapsto wf\}_{w\in W(B_n)}$ are linearly independent over $K(Y_1,\dots,Y_n)$.

Second, we note that we have, for $f\in K[Y_1,\dots,Y_n]$ and $\ti\in\beta$,
\begin{equation}\label{eqpolrep}
R(\psi_w)\bigl(fe'(\ti)\bigr)=\Bigl(P_{w,\ti}\cdot wf+\sum_{w'<w}Q_{w',w,\ti}\cdot w'f\Bigr)e'\bigl(w(\ti)\bigr)\,,
\end{equation}
where $<$ denotes the Bruhat order of $W(B_n)$ and $P_{w,\ti},Q_{w',w,\ti}\in K(Y_1,\dots,Y_n)$ with $P_{w,\ti}\neq 0$ (this is easily checked by induction on the length of $w$).

Now, we have a decomposition of $V^\beta_{\bl}$ as a direct sum of its subspaces $e(\tj)V^\beta_{\bl}e(\ti)$. From $e(\tj)\psi_w=\psi_we\bigl(w^{-1}(\tj)\bigr)$, we note that $e(\tj)V^\beta_{\bl}e(\ti)$ is spanned by the elements $y_1^{m_1}\cdots y_n^{m_n}\psi_we(\ti)$ with $w$ such that $w(\ti)=\tj$. So we have to prove that these elements are linearly independent.

We fix $\ti,\tj\in\beta$ and we assume that we have
\[\sum_{w,\textbf{m}}c_{w,\textbf{m}}y_1^{m_1}\cdots y_n^{m_n}\psi_we(\ti)=0\ \ \ \ \ \ \ \text{($\textbf{m}$ denotes $(m_1,\dots,m_n)$)\,,}\]
for some $c_{w,\textbf{m}}\in K$, and where the sum is taken over $\textbf{m}\in(\Z_{\geq0})^n$ and $w\in W(B_n)$ such that $w(\ti)=\tj$. Applying the map $R$ and using (\ref{eqpolrep}), we obtain for any $f\in K[Y_1,\dots,Y_n]$
\[\sum_{w,\textbf{m}}c_{w,\textbf{m}}Y_1^{m_1}\cdots Y_n^{m_n}\Bigl(P_{w,\ti}\cdot wf+\sum_{w'<w}Q_{w',w,\ti}\cdot w'f\Bigr)e'(\tj)=0\ .\]
Then let $M\subset W(B_n)$ be the set of maximal elements for the Bruhat order such that $c_{w,\textbf{m}}\neq0$ for some $\textbf{m}$. From the $K(Y_1,\dots,Y_n)$-linear independence of $\{f\mapsto wf\}_{w\in W(B_n)}$, we deduce that
\[\sum_{\textbf{m}}c_{w,\textbf{m}}Y_1^{m_1}\cdots Y_n^{m_n}P_{w,\ti}=0\ \ \ \ \ \forall w\in M\,,\]
and since $P_{w,\ti}\neq 0$, we conclude that $c_{w,\textbf{m}}=0$ for any $\textbf{m}$. This shows that $M$ is empty and in turn that $c_{w,\textbf{m}}=0$ for any $w$ and $\textbf{m}$.
\end{proof}

\subsection{The generalisation of quiver Hecke algebra for type $D$}\label{sec-defW}

Let $n\geq 2$. The formulas
\begin{equation*}
\begin{split}
s_0\cdot(i_1,\ldots,i_n) &=(i_2^{-1},i_1^{-1},i_3,\ldots,i_n) \\
s_k\cdot(\ldots,i_k,i_{k+1},\ldots) &=(\ldots,i_{k+1},i_k,\ldots)\ \ \quad\text{for $k=1,\dots,n-1$,}
\end{split}
\end{equation*}
provide an action of the Weyl group $W(D_n)$ on $(S_{\bl})^n$. 
Let $\delta$ be a finite subset of $(S_{\bl})^n$ stable under this action of $W(D_n)$ or, in other words, let $\delta$ be a finite union of $W(D_n)$-orbits in $(S_{\bl})^n$.
\begin{definition} \label{definition:vv algebraD}
The algebra $W_{\bl}^{\delta}$ is the unital $K$-algebra generated by elements 
\begin{equation*}
\{ \Psi_a \}_{0\leq a\leq n-1} \cup \{ y_j \}_{1\leq j\leq n} \cup \{ \idemp \}_{\textbf{i} \in \delta}
\end{equation*}
which satisfy the following defining relations. 
\begin{align}
\sum_{\textbf{i} \in \delta} \idmep & = 1\,,\ \ \quad \idmep\idmepj = \delta_{\ti,\textbf{j}}\idmep\,,\ \ \ \forall\ti,\textbf{j}\in \delta\,, \label{Rel:V1D} \\ 
y_iy_j&=y_jy_i\,,\ \ \quad y_{i}\idmep=\idmep y_{i}\,,\ \ \ \ \forall i,j\in\{1,\ldots,n\} \text{ and } \forall\ti\in\delta\,,\label{Rel:V2D}
\end{align}
and, for $a,b,b'\in\{0,1,\dots,n-1\}$ with $b,b'\neq 0$, for $j\in\{1,\dots,n\}$, and for $\ti=(i_1,\dots,i_n)\in\delta$,
\begin{align}
\Psi_{a}\idmep &= e(s_{a}(\ti))\Psi_a\,,\label{Rel:V3D}\\
(\Psi_b y_j - y_{\pi_b(j)}\Psi_b)\idmep&=\left\{
\begin{array}{l l l}
			-\idmep & \quad \textrm{ if } j=b,\ i_b=i_{b+1}\,,\\
			\idmep & \quad \textrm{ if } j=b+1,\ i_b=i_{b+1}\,,\\
			0 & \quad \textrm{ else,}
\end{array} \right. \label{Rel:V4D}
\end{align}
where $\pi_b$ is the transposition of $b$ and $b+1$ acting on $\{1,\dots,n\}$;
\begin{align}
\Psi_b\Psi_{b'}&=\Psi_{b'}\Psi_b \hspace{0.3em} \textrm{ if } |b'-b|>1\,, \label{Rel:V5D} \\
\Psi_b^2\idmep &= \left\{
\begin{array}{l l l l l}
0 & \quad \textrm{if } i_b=i_{b+1}\,, \\[0.2em]
\idmep & \quad \textrm{if } i_{b} \nleftrightarrow i_{b+1}\,, \\[0.2em]
(y_{b+1} - y_b)\idmep & \quad \textrm{if } i_b \rightarrow i_{b+1}\,,\\[0.2em]
(y_b - y_{b+1})\idmep & \quad \textrm{if } i_b \leftarrow i_{b+1}\,, \\[0.2em]
(y_{b+1}-y_b)(y_b-y_{b+1})\idemp & \quad \textrm{if } i_b \leftrightarrow i_{b+1}\,, 
\end{array} \right. \label{Rel:V6D}\\
(\Psi_{b}\Psi_{b+1}\Psi_{b} - \Psi_{b+1}\Psi_{b}\Psi_{b+1})\idemp &= \left\{
\begin{array}{l l l l}
\idmep & \quad i_b=i_{b+2}\rightarrow i_{b+1}\,, \\[0.2em]
-\idmep & \quad i_b=i_{b+2}\leftarrow i_{b+1}\,, \\[0.2em]
(y_{b+2}-2y_{b+1}+y_b)\idmep & \quad i_b=i_{b+2} \leftrightarrow i_{b+1}\,,\\[0.2em]
0 & \quad \text{else\,,}\,
\end{array} \right. \label{Rel:V7D}
\end{align}
\begin{align}
\Psi_0 y_j\idmep & =\left\{
\begin{array}{l l}
(-y_{\pi_1(j)}\Psi_0+1)\idmep & \quad j\in\{1,2\}\ \text{and}\ i_1^{-1}=i_2\,,\\[0.2em]
-y_{\pi_1(j)}\Psi_0\idmep & \quad j\in\{1,2\}\ \text{and}\ i_1^{-1}\neq i_2\,,\\[0.2em]
y_j\Psi_0\idmep & \quad \text{else\,,}\,
\end{array} \right.\label{Rel:V8D}
\end{align}
where $\pi_1$ is the transposition of $1$ and $2$;
\begin{align}
\Psi_0 \Psi_b &= \Psi_b \Psi_0 \quad\textrm{ if } b=1\ \text{or}\ b> 2\,, \label{Rel:V9D}\\
\Psi_0^2\idmep &=\left\{
\begin{array}{l l l l l}
0 & \quad \textrm{if } i_1^{-1}=i_{2}\,, \\[0.2em]
\idmep & \quad \textrm{if } i_{1}^{-1} \nleftrightarrow i_{2}\,, \\[0.2em]
(y_1+y_2)\idmep & \quad \textrm{if } i_1^{-1} \rightarrow i_{2}\,,\\[0.2em]
-(y_1+ y_{2})\idmep & \quad \textrm{if } i_1^{-1} \leftarrow i_{2}\,, \\[0.2em]
-(y_1+ y_{2})^2\idemp & \quad \textrm{if } i_1^{-1} \leftrightarrow i_{2}\,, 
\end{array} \right. \label{Rel:V10D}\\
(\Psi_0 \Psi_2\Psi_0 - \Psi_2\Psi_0\Psi_2)\idmep &= \left\{
\begin{array}{l l l l}
\idmep & \quad i_1^{-1}=i_{3}\rightarrow i_{2}\,, \\[0.2em]
-\idmep & \quad i_1^{-1}=i_{3}\leftarrow i_{2}\,, \\[0.2em]
(y_{3}-2y_{2}-y_1)\idmep & \quad i_1^{-1}=i_{3} \leftrightarrow i_{2}\,,\\[0.2em]
0 & \quad \text{else\,.}\,
\end{array} \right.
\label{Rel:V11D}
\end{align}
\end{definition}

It is straightforward to check that the algebra $W_{\bl}^{\delta}$ admits a $\Z$-grading given by
\begin{equation*}
\begin{split}
&\textrm{deg}(\idmep)=0\,, \\ 
&\textrm{deg}(y_j)=2\,, \\
&\textrm{deg}(\Psi_0\idmep)=\left\{
\begin{array}{l l l}	
|i_1^{-1} \rightarrow i_{2}|+|i_1^{-1} \leftarrow i_{2}| & \quad \textrm{ if  } i_1^{-1} \neq i_{2}\,,\\
-2 & \quad \textrm{ if  } 	i_1^{-1} = i_{2}\,,	
\end{array} \right. \\
&\textrm{deg}(\Psi_b \idmep)=\left\{
\begin{array}{l l l}	
|i_b \rightarrow i_{b+1}|+|i_b \leftarrow i_{b+1}| & \quad \textrm{ if  } i_b \neq i_{b+1}\,,\\
-2 & \quad \textrm{ if  } 	i_b = i_{b+1}.	
\end{array} \right. 
\end{split}
\end{equation*}

If $\delta=\delta_1\cup\delta_2\dots\cup \delta_k$ is the union of orbits $\delta_1,\dots,\delta_k$ in $(S_{\bl})^n$ for the action of $W(D_n)$ we have, exactly as we obtained (\ref{dec-V}) above, that
\begin{equation}\label{dec-VD}
W_{\bl}^{\delta}\cong W_{\bl}^{\delta_1}\oplus\dots\oplus W_{\bl}^{\delta_k}\ .
\end{equation}

\begin{remarks}\label{rem-orb}
$\bullet$ Let $\beta$ be an orbit in $(S_{\bl})^n$ for the action of $W(B_n)$. Then $\beta$ is invariant under the action of $W(D_n)$ and decomposes into one or two $W(D_n)$-orbits. Assume that $\beta$ is the orbit of an element $(i_1,\dots,i_n)\in (S_{\bl})^n$. Then we have
\[\beta=\{\ (i_{\pi(1)}^{\epsilon_1},\dots,i_{\pi(n)}^{\epsilon_n})\ |\ \pi\in\mathfrak{S}_n\,, \epsilon_1,\dots,\epsilon_n\in\{\pm1\}\ \}\,,\]
where $\mathfrak{S}_n$ is the symmetric group on $\{1,\dots,n\}$. Now set
\[\beta_+:=\{\,(i_{\pi(1)}^{\epsilon_1},\dots,i_{\pi(n)}^{\epsilon_n})\ |\ \pi\in\mathfrak{S}_n\,, \epsilon_1,\dots,\epsilon_n\in\{\pm1\}\ \text{with an even number of $-1$}\,\}\ \ \ \ \ \text{and}\ \ \ \ \beta_-:=\beta\backslash\beta_+\ .\]
Then $\beta_+$ is the orbit of $(i_1,\dots,i_n)$ under the action of $W(D_n)$. We have two possibilities:

$-$ If $i_a^2\neq 1$ for any $a=1,\dots,n$, then $\beta=\beta_+\sqcup\beta_-$ decomposes as two different $W(D_n)$-orbits of the same size $|\beta|/2$.

$-$ If there is $a\in\{1,\dots,n\}$ such that $i_a^2=1$, then $\beta=\beta_+$ is a single $W(D_n)$-orbit (and $\beta_-$ is empty).

$\bullet$ We recall that there is a bijection between the orbits of $W(B_n)$ in $(S_{\bl})^n$ and a certain subset of dimension vectors for the quiver $\Gamma_{\bl}$ of height $n$ \cite[Remark 2.5]{PW}. More precisely, a dimension vector for $\Gamma_{\bl}$ is a map $\nu$ from $S_{\bl}$ to $\Z_{\geq0}$ with finite support, and the orbits of $W(B_n)$ in $(S_{\bl})^n$ are in bijection with the set
\begin{equation*}
\left\{\,\nu\,:\,S_{\bl}\to \Z_{\geq0}\ \bigr|\ \nu(i^{-1})= \nu(i)\ \ \forall i\in S_{\bl}\,,\ \ \ \textrm{and}\ \ \ \frac{1}{2}\sum_{\substack{i\in S_{\bl}\\i^2\neq 1}} \nu(i)+\sum_{\substack{i\in S_{\bl}\\i^2=1}} \nu(i)=n \right\}\,.
\end{equation*}
A bijection is given by associating to an orbit $\beta$ of $W(B_n)$ in $(S_{\bl})^n$ the map $\nu$ such that, for any $i\in S_{\bl}$, the number $\nu(i)$ is equal to the number of occurrences of $i$ and $i^{-1}$ in any element of $\beta$. Therefore, the $W(D_n)$-orbits which are also $W(B_n)$-orbits correspond to dimension vectors $\nu$ (as above) which are non-zero on at least one element $i$ such that $i^2=1$ while pairs of different $W(D_n)$-orbits, the union of which form a single $W(B_n)$-orbit, correspond to dimension vectors being zero on the set of elements $i$ such that $i^2=1$.
\hfill$\triangle$
\end{remarks}

\subsection{The algebra $W_{\bl}^{\beta}$ and a subalgebra of fixed points of $V_{\bl}^{\beta}$}

Let $n\geq 2$ and let $\beta$ be a finite union of $W(B_n)$-orbits in $(S_{\bl})^n$. The set $\beta$ is also a finite union of $W(D_n)$-orbits in $(S_{\bl})^n$, so that both algebras $V_{\bl}^{\beta}$ and $W_{\bl}^{\beta}$ are defined. From the definition of the algebra $V_{\bl}^{\beta}$ it is immediate that the following map gives an involutive automorphism of the algebra:
\[\rho\ :\ \ \ \psi_0\mapsto-\psi_0\,,\ \ \ \psi_b\mapsto\psi_b\,,\ b=1,\dots,n-1,\ \ \ \ y_j\mapsto y_j\,,\ j=1,\dots,n,\ \ \ \ \ e(\ti)\mapsto e(\ti)\,,\ \ti\in\beta\ .\]
We denote by $\bigl(V_{\bl}^{\beta}\bigr)^{\rho}:=\{x\in V_{\bl}^{\beta}\ |\ \rho(x)=x\}$ the subalgebra of $V_{\bl}^{\beta}$ consisting of elements fixed by $\rho$.

\paragraph{A choice of basis for $V_{\bl}^{\beta}$.} Let $\iota:W(D_n)\longrightarrow W(B_n)$ denote the standard inclusion map from Subsection \ref{subsection:root datum} defined by: $s_0\mapsto r_0r_1r_0$ and $s_k \mapsto r_k$ for $k\neq 0$.  The following lemma will allow us to choose reduced expressions compatible with $\iota$.
\begin{lemma}\label{lemBD}
Let $w\in W(B_n)$ such that $w\in\iota\bigl(W(D_n)\bigr)$. There is a reduced expression of $w$ in terms of $r_0,r_1,\dots,r_{n-1}$ which is also of the form $\iota(s_{i_1})\dots \iota(s_{i_k})$ for a reduced expression $s_{i_1}\dots s_{i_k}$ in $W(D_n)$ in terms of $s_0,s_1,\dots,s_{n_1}$.
\end{lemma}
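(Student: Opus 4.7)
My plan is to prove the lemma by induction on $\ell_D(w')$, where $w':=\iota^{-1}(w)\in W(D_n)$ (with $\ell_D$ denoting the length in $W(D_n)$). The base case $w'=e$ is trivial. For the inductive step, the idea is to find an appropriate left descent of $w'$ in $W(D_n)$ and prepend it to a good reduced expression of the shorter element obtained by induction.

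First I note that since the simple roots satisfy $\alpha'_i=\alpha_i$ for $i\geq 1$, for such $i$ the element $s_i$ is a left descent of $w'$ in $W(D_n)$ if and only if $r_i$ is a left descent of $w$ in $W(B_n)$. Thus when $w'$ admits a left descent $s_i$ with $i\in\{1,\dots,n-1\}$, I apply the induction hypothesis to $s_iw'$ to get a reduced expression $s_iw'=s_{i_2}\cdots s_{i_k}$ whose $\iota$-image $\iota(s_{i_2})\cdots\iota(s_{i_k})$ is reduced in $W(B_n)$; prepending $\iota(s_i)=r_i$ then yields a word of length $1+\ell_B(\iota(s_iw'))=\ell_B(w)$, which is automatically a reduced expression of $w$.

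The delicate case is when $s_0$ is the unique left descent of $w'$ in $W(D_n)$; by the equivalence just noted, this is exactly when $r_0$ is the unique left descent of $w$ in $W(B_n)$. The key claim here is that $\ell_B(\iota(s_0w'))=\ell_B(w)-3$, so that prepending $\iota(s_0)=r_0r_1r_0$ (of $B$-length $3$) to a reduced expression of $\iota(s_0w')$ still produces a reduced expression of $w$. To prove this length drop, I use the formula $\ell_B(\iota(v))=\ell_D(v)+m^-(v)$ for $v\in W(D_n)$, where $m^-(v)$ denotes the number of sign changes in the signed permutation $v$ (this holds because the positive $B_n$-roots that are not $D_n$-roots are exactly the $\{2\epsilon_i\}_i$). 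Writing $w^{-1}(\epsilon_k)=\delta_k\epsilon_{c_k}$ with $\delta_k\in\{\pm 1\}$, the unique-descent hypothesis forces $\delta_1=-1$, and a direct case analysis of the non-descent conditions $w^{-1}(\epsilon_{i+1}-\epsilon_i)>0$ for $i\geq 1$ shows that $\delta_i=+1$ forces $\delta_{i+1}=+1$. If $\delta_2$ were $+1$, this would propagate to $\delta_k=+1$ for all $k\geq 2$, giving $m^-(w')=1$, which contradicts that $m^-(w')$ is even for $w'\in W(D_n)$. Hence $\delta_2=-1$; then $s_0$ flips the signs at the positions $c_1$ and $c_2$ (both carrying $-1$), so $m^-(s_0w')=m^-(w')-2$, and combined with $\ell_D(s_0w')=\ell_D(w')-1$ this yields the drop $\ell_B(\iota(s_0w'))=\ell_B(w)-3$.

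The principal obstacle I anticipate is exactly this last case: naively prepending $\iota(s_0)$ to a reduced expression of $s_0w'$ would in general produce a word of length $\ell_B(w)+2$ rather than $\ell_B(w)$. The parity argument exploiting that $w'\in W(D_n)$, so $m^-(w')$ is even, is what rules out the bad configuration $\delta_2=+1$; without this constraint the statement would fail.
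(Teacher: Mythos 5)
Your proof is correct, and it follows a genuinely different route from the paper's. The paper argues directly on reduced words: it shows that in any reduced word for an element of $W(B_n)$ whose $r_0$-parity is even, one can use the braid relations to pair up consecutive $r_0$'s into blocks $r_0r_1r_0$ (after first establishing that any reduced word in $r_1,\dots,r_{n-1}$ can be rearranged to contain at most one $r_1$). You instead argue by induction on $\ell_D(w')$ via a left-descent analysis, and the interesting case is carried by the root-theoretic length formula $\ell_B(\iota(v)) = \ell_D(v) + m^-(v)$ (valid because $\Phi^+_B\setminus\Phi^+_D=\{2\eps_i\}$), together with the parity constraint on $m^-$ characterising $W(D_n)$. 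Both reach the same conclusion; the paper's argument is more combinatorial and self-contained at the level of words, whereas yours leans on standard Coxeter-theoretic facts (exchange/descent criteria, root counting for length) and in exchange is arguably cleaner structurally, because the length drop $\ell_B(\iota(s_0w'))=\ell_B(w)-3$ makes it immediate that prepending $r_0r_1r_0$ produces a reduced word without any further word-level rearrangement. One small presentational caveat: you assert "this is exactly when $r_0$ is the unique left descent of $w$ in $W(B_n)$," but the equivalence in that direction is not needed and would require the $\delta_2=-1$ analysis you give afterwards; for the induction you only use the forward implication (no $s_i$ with $i\geq 1$ is a descent of $w'$, so no $r_i$ with $i\geq 1$ is a descent of $w$, hence $r_0$ is), which is all that is used and is correct.
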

\begin{proof}
$\bullet$ We start with the fact that any element in the subgroup of $W(B_n)$ generated by $r_1,\dots,r_{n-1}$ (isomorphic to the symmetric group on $n$ letters) has a reduced expression with at most one $r_1$. 

So let $r_{a_1}\dots r_{a_l}\in W(B_n)$ be a reduced expression with $a_1,\dots,a_l\in\{1,\dots,n-1\}$. Assume that it contains more than one occurrence of $r_1$. Then choose two of these occurrences with no other $r_1$ between them. By induction on $n$ (the subgroup generated by $r_2,\dots,r_{n-1}$ is isomorphic to the symmetric group on $n-1$ letters), we can assume that there is at most one occurrence of $r_2$ between them. If there is no such $r_2$ then we can use the braid relations (see Remark \ref{braidB}) to put the two $r_1$ next to each other, which contradicts the fact that we started with a reduced expression. If there is one $r_2$ then we can use the braid relations to create the subword $r_1r_2r_1$ and transform it into $r_2r_1r_2$. We thus reduced the number of occurrences of $r_1$.

$\bullet$ Now let $u=r_0r_{a_1}\dots r_{a_l}r_0\in W(B_n)$ be a reduced expression with $a_1,\dots,a_l\in\{1,\dots,n-1\}$. We just explained that there is a reduced expression of $r_{a_1}\dots r_{a_l}$ with at most one occurrence of $r_1$. In fact here $r_1$ must appear once otherwise we could use the braid relations to put the two $r_0$ next to each other and thus the expression would not be reduced. So we conclude that there is a reduced expression of the form $u=xr_0r_1r_0y$ with $x,y$ only in terms of $r_2,\dots,r_{n-1}$.

Finally, since $w\in\iota\bigl(W(D_n)\bigr)$ there is an even number of occurrences of $r_0$ in any of its reduced expressions, and therefore the above discussion implies the existence of a reduced expression for $w$ which is of the form $\iota(s_{i_1})\dots \iota(s_{i_k})$.

$\bullet$ If $s_{i_1}\dots s_{i_k}\in W(D_n)$ were not reduced we could remove (by cancellation law) two $s_i$ in this expression and this would give, by application of $\iota$, an expression for $w$ strictly smaller than its reduced expression $\iota(s_{i_1})\dots \iota(s_{i_k})$, which is impossible.
\end{proof}

Using the lemma, we now fix for $w\in W(B_n)$ a reduced expression in terms of $r_0,r_1,\dots,r_{n-1}$ such that if $w\in\iota\bigl(W(D_n)\bigr)$ then the reduced expression is of the form $\iota(s_{i_1})\dots \iota(s_{i_k})$ for a reduced expression $s_{i_1}\dots s_{i_k}$ in $W(D_n)$ (if $w\notin\iota\bigl(W(D_n)\bigr)$ then any reduced expression is allowed).

And for $w\in W(D_n)$, we fix the reduced expression $s_{i_1}\dots s_{i_k}$ in terms of $s_0,s_1,\dots,s_{n-1}$ such that $\iota(s_{i_1})\dots \iota(s_{i_k})$ is the chosen reduced expression for $\iota(w)$.

\begin{remark}
As an example, note that $r_0r_1r_2r_1r_0$ is a reduced expression that we are not allowed to choose. Using the braid relations, we find that an allowed one is $r_2r_0r_1r_0r_2=\iota(s_2)\iota(s_0)\iota(s_2)$. Note moreover that $\psi_0\psi_1\psi_2\psi_1\psi_0$ is not equal to $\psi_2\psi_0\psi_1\psi_0\psi_2$.

Besides, note also that $\iota(s_{i_1})\dots \iota(s_{i_k})$ does not have to be reduced in $W(B_n)$ even if $s_{i_1}\dots s_{i_k}$ is reduced in $W(D_n)$. For example, $s_0s_2s_0$ is reduced while $\iota(s_0)\iota(s_2)\iota(s_0)=r_0r_1r_0r_2r_0r_1r_0$ is not.\hfill$\triangle$
\end{remark}

\vskip .2cm
Once these choices of reduced expressions are made, we define, as in Subsection \ref{sec-defV-psq=1}, $\psi_w:=\psi_{i_1}\dots \psi_{i_l}\in V_{\bl}^{\beta}$ where $r_{i_1}\dots r_{i_l}$ is the chosen reduced expression for $w\in W(B_n)$. Similarly, for $w\in W(D_n)$ we set $\Psi_w:=\Psi_{j_1}\dots \Psi_{j_k}\in W_{\bl}^{\beta}$ where $s_{j_1}\dots s_{j_k}$ is the chosen reduced expression for $w$.

Note that $r_0r_1r_0$ is the unique reduced expression for $\iota(s_0)$ and therefore $\psi_{\iota(s_0)}=\psi_0\psi_1\psi_0$, while $\psi_{\iota(s_k)}=\psi_k$ for $k=1,\dots,n-1$.
\begin{theorem}[Basis Theorem 2]\label{prop:fixed point VV is SVV}
The set 
\begin{equation*}
\mathcal{D}:=\{ y_1^{m_1}\cdots y_n^{m_n}\Psi_w\idmep \mid w \in W(D_n), (m_1,\ldots,m_n) \in (\mathbb{Z}_{\geq 0})^n, \textbf{i}\in \beta \}
\end{equation*}
is a $K$-basis for $W^\beta_{\bl}$ and $W^\beta_{\bl}$ is isomorphic to the subalgebra of fixed points $\bigl(V_{\bl}^{\beta}\bigr)^{\rho}$ of $V_{\bl}^{\beta}$.
\end{theorem}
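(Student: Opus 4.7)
The plan is to construct an explicit algebra morphism $\phi: W^\beta_{\bl}\to V^\beta_{\bl}$ whose image is precisely $(V^\beta_{\bl})^\rho$, and then to use Theorem \ref{basis-theorem1} to derive both the basis claim for $\mathcal{D}$ and the isomorphism at once.

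First, I would define $\phi$ on generators by $e(\ti)\mapsto e(\ti)$, $y_j\mapsto y_j$, $\Psi_b\mapsto\psi_b$ for $b=1,\dots,n-1$, and $\Psi_0\mapsto\psi_0\psi_1\psi_0$. Each image is $\rho$-fixed, since $\psi_0$ appears an even number of times. To extend $\phi$ to an algebra homomorphism, one checks that the defining relations (\ref{Rel:V1D})--(\ref{Rel:V11D}) of $W^\beta_{\bl}$ are satisfied under $\phi$. Those not involving $\Psi_0$, namely (\ref{Rel:V1D})--(\ref{Rel:V7D}), transfer directly from the analogous (\ref{Rel:V1})--(\ref{Rel:V7}). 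The relations involving $\Psi_0$, namely (\ref{Rel:V8D})--(\ref{Rel:V11D}), form the core computation; they reduce to case-by-case simplifications using $\psi_0^2=1$ from (\ref{Rel:V10}), the quartic relation (\ref{Rel:V11}), the $\psi_0$--$y_j$ relations (\ref{Rel:V8})--(\ref{Rel:V9}), the commutation of $\psi_0$ with $\psi_b$ for $b\geq 2$ from (\ref{Rel:V5}), and the $\psi_1$-relations (\ref{Rel:V4}), (\ref{Rel:V6}), (\ref{Rel:V7}), while carefully tracking the idempotent shift $\psi_0 e(\ti)=e(r_0(\ti))\psi_0$ from (\ref{Rel:V3}). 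As a representative instance, for (\ref{Rel:V10D}) the image $(\psi_0\psi_1\psi_0)^2 e(\ti)$ simplifies to $\psi_0\,\psi_1^2\,e(r_0(\ti))\,\psi_0$ via $\psi_0^2=1$, and the five cases of (\ref{Rel:V6}) applied to $\psi_1^2 e(r_0(\ti))$, combined with $\psi_0 y_1\psi_0=-y_1$ and $\psi_0 y_2\psi_0=y_2$, reproduce exactly the five cases on the right-hand side of (\ref{Rel:V10D}).

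In parallel, I would show that $\mathcal{D}$ spans $W^\beta_{\bl}$ by mimicking the spanning half of the proof of Theorem \ref{basis-theorem1}: using (\ref{Rel:V4D}) and (\ref{Rel:V8D})--(\ref{Rel:V9D}), all $y_j$'s can be moved to the left modulo words of strictly shorter $\Psi$-length; using the $W(D_n)$-braid relations (\ref{Rel:V5D}), (\ref{Rel:V7D}), (\ref{Rel:V9D}), (\ref{Rel:V11D}), any reduced product of $\Psi_b$'s can be rewritten as the chosen reduced product for the corresponding $w\in W(D_n)$ up to shorter terms; and any non-reduced word can be shortened by creating a $\Psi_b^2$ and applying (\ref{Rel:V6D}) or (\ref{Rel:V10D}). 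A standard induction on the $\Psi$-length then yields the spanning.

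To conclude, by the compatible choice of reduced expressions furnished by Lemma \ref{lemBD}, one has $\phi(\Psi_w)=\psi_{\iota(w)}$ for every $w\in W(D_n)$, so $\phi$ sends the spanning set $\mathcal{D}$ injectively into the basis $\mathcal{B}$ of $V^\beta_{\bl}$ from Theorem \ref{basis-theorem1}. Linear independence of $\phi(\mathcal{D})$ forces $\mathcal{D}$ to be a $K$-basis of $W^\beta_{\bl}$. Moreover, the parity of the number of $r_0$'s appearing in a reduced expression of $v\in W(B_n)$ is a well-defined sign character of $W(B_n)$, so $\rho(\psi_v)=\psi_v$ if and only if $v\in\iota(W(D_n))$; decomposing an arbitrary element in $\mathcal{B}$ then shows that $(V^\beta_{\bl})^\rho$ is spanned by $\{y_1^{m_1}\cdots y_n^{m_n}\psi_{\iota(w)}e(\ti)\}$, which is precisely $\phi(\mathcal{D})$. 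Hence $\phi$ is an algebra isomorphism $W^\beta_{\bl}\cong(V^\beta_{\bl})^\rho$. The principal obstacle is the case analysis needed to verify (\ref{Rel:V10D}) and especially (\ref{Rel:V11D}) under $\phi$: for the latter, one must unravel $\Psi_0\Psi_2\Psi_0$ by using the commutation of $\psi_2$ with $\psi_0$, the quartic (\ref{Rel:V11}), and the $\psi_1\psi_2$-braid (\ref{Rel:V7}), all while carefully tracking how the idempotent shifts after each rearrangement.
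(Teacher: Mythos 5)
Your proposal follows essentially the same route as the paper's proof: defining $\phi$ by $\Psi_0\mapsto\psi_0\psi_1\psi_0$, $\Psi_b\mapsto\psi_b$, $y_j\mapsto y_j$, $e(\ti)\mapsto e(\ti)$; checking relations to make it a homomorphism; establishing spanning of $\mathcal{D}$ by the same inductive argument as Theorem \ref{basis-theorem1}; using the compatible reduced expressions from Lemma \ref{lemBD} to see that $\phi(\mathcal{D})$ lands bijectively onto $\mathcal{B}'\subset\mathcal{B}$; and observing that $\mathcal{B}'$ is precisely the $\rho$-fixed part of $\mathcal{B}$ because of the parity of $r_0$-occurrences, so that $\mathcal{B}'$ is a basis of $(V^\beta_{\bl})^\rho$ in characteristic $\neq 2$. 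Your sample verification of (\ref{Rel:V10D}) and your remark on (\ref{Rel:V11D}) reproduce the paper's computations correctly, so the approach and conclusion match.
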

\begin{proof}
First, we can reproduce the same argument as in the beginning of the proof of Theorem \ref{basis-theorem1} to show that the set
$\mathcal{D}$ is a spanning set for $W^\beta_{\bl}$. So we do not repeat the details.

Then define a map from the set of generators of $W^\beta_{\bl}$ to $V_{\bl}^{\beta}$:
\[\Psi_k\mapsto \psi_{\iota(s_k)}\ (k=0,1,\dots,n-1)\,,\ \ \ y_j\mapsto y_j\ (j=1,\dots,n)\,,\ \ \ \idmep\mapsto\idmep\ (\ti\in\beta)\ .\]
We claim that this map extends to an algebra homomorphism:
\begin{equation}\label{iso-fixed-points}
\phi\ :\ W^\beta_{\bl} \ \longrightarrow\ \bigl(V_{\bl}^{\beta}\bigr)^{\rho}\ .
\end{equation}
First, it is obvious that the map indeed takes values in $\bigl(V_{\bl}^{\beta}\bigr)^{\rho}$. So it remains to check that the defining relations of $W^\beta_{\bl}$ are preserved. We postpone these calculations to the end of the proof below and assume for now that we do obtain an algebra homomorphism $\phi$.

From the choice of reduced expressions we made above, we have that $\phi$ maps $\mathcal{D}$ to the set 
$$\mathcal{B}^\prime=\{y_1^{m_1}\cdots y_n^{m_n}\psi_{\iota(w)}\idmep \mid w \in W(D_n), (m_1,\ldots,m_n) \in (\mathbb{Z}_{\geq 0})^n, \textbf{i}\in \beta \}\ .$$ 
This set $\mathcal{B}^\prime$ is a subset of the basis $\mathcal{B}$ of $V_{\bl}^{\beta}$ appearing in Theorem \ref{basis-theorem1}. Further it is immediate to see that, for $x\in\mathcal{B}$, we have $\rho(x)=x$ if $x\in\mathcal{B}^\prime$ while $\rho(x)=-x$ if $x\notin \mathcal{B}^\prime$. Since we are in characteristic different from 2 we conclude that $\mathcal{B}^\prime$ is a basis of $\bigl(V_{\bl}^{\beta}\bigr)^{\rho}$.

So we now conclude first that $\mathcal{D}$ is a linearly independent set since it is sent to a subset of a basis and is therefore a basis of $W^\beta_{\bl}$. Second, since $\phi$ maps a basis of $W^\beta_{\bl}$ bijectively to a basis of $\bigl(V_{\bl}^{\beta}\bigr)^{\rho}$, then it is bijective and the two algebras are isomorphic.

It remains to prove that we do obtain an algebra homomorphism $\phi$ by checking the defining relations (\ref{Rel:V1D})--(\ref{Rel:V11D}) of $W^\beta_{\bl}$. These are straightforward calculations using the defining relations (\ref{Rel:V1})--(\ref{Rel:V11}) of $V_{\bl}^{\beta}$. We give some of the details.

First, all relations not involving $\Psi_0$ are immediately seen to be preserved.

$\bullet$ \textbf{Relation (\ref{Rel:V3D}) with $a=0$.} We have
$$ \psi_0\psi_1\psi_0e(\ti)=e(r_0r_1r_0(\ti))\psi_0\psi_1\psi_0\,,$$
and so the relation is preserved since $s_0(\ti)=r_0r_1r_0(\ti)$.

$\bullet$ \textbf{Relation (\ref{Rel:V6D}).} If $j\notin\{1,2\}$ we have immediately $\psi_0\psi_1\psi_0y_j=y_j\psi_0\psi_1\psi_0$. Let $j=2$. Then we have
\[\psi_0\psi_1\psi_0y_2e(\ti)=\psi_0\psi_1y_2e(r_0(\ti))\psi_0e(\ti)=\psi_0\bigl(y_1\psi_1+\delta_{i_1^{-1},i_2}\bigr)\psi_0e(\ti)=(-y_1\psi_0\psi_1\psi_0+\delta_{i_1^{-1},i_2})e(\ti)\ .\]
The case $j=1$ is checked either by a similar calculation or by using the antiautomorphism of $V_{\bl}^{\beta}$ sending each generator to itself.

$\bullet$ \textbf{Relation (\ref{Rel:V7D}).} This relation is immediately verified if $b>2$, while for $b=1$ it follows from $\psi_0\psi_1\psi_0\psi_1=\psi_1\psi_0\psi_1\psi_0$.

$\bullet$ \textbf{Relation (\ref{Rel:V8D}).} We have, using $\psi_0^2=1$ and $\psi_0e(\ti)=e(r_0(\ti))\psi_0$:
\[(\psi_0\psi_1\psi_0)^2e(\ti)=\psi_0\psi_1^2\psi_0e(\ti)=\psi_0\cdot \psi_1^2e(r_0(\ti))\cdot\psi_0\ .\]
We note that $r_0(\ti)$ starts as $(i_1^{-1},i_2,\dots)$ and therefore the verification of (\ref{Rel:V8D}) is obtained directly by using (\ref{Rel:V6}) with $b=1$  followed by $\psi_0y_1=-y_1\psi_0$ and $\psi_0^2=1$.

$\bullet$ \textbf{Relation (\ref{Rel:V9D}).} We have, using $\psi_0\psi_2=\psi_2\psi_0$, $\psi_0^2=1$ and $\psi_0e(\ti)=e(r_0(\ti))\psi_0$:
\[(\psi_0\psi_1\psi_0\psi_2\psi_0\psi_1\psi_0-\psi_2\psi_0\psi_1\psi_0\psi_2)e(\ti)=\psi_0\cdot(\psi_1\psi_2\psi_1-\psi_2\psi_1\psi_2)e(r_0(\ti))\cdot\psi_0\ .\]
We note that $r_0(\ti)$ starts as $(i_1^{-1},i_2,i_3,\dots)$ and therefore (\ref{Rel:V9D}) is obtained directly by using (\ref{Rel:V7}) with $b=1$  followed by $\psi_0y_1=-y_1\psi_0$ and $\psi_0^2=1$.
\end{proof}

\begin{remark}
Theorem \ref{prop:fixed point VV is SVV} and its proof also shows that $W^\beta_{\bl}$ is isomorphic to the subalgebra of $V_{\bl}^{\beta}$ generated by $\psi_0\psi_1\psi_0$ together with all the generators of $V_{\bl}^{\beta}$ but $\psi_0$ (which coincides therefore with the subalgebra of fixed points $\bigl(V_{\bl}^{\beta}\bigr)^{\rho}$). An isomorphism is the natural map given in (\ref{iso-fixed-points}).\hfill$\triangle$
\end{remark}

\section{Cyclotomic quotients of $\hH(B_n)$ and $V_{\bl}^{\beta}$}\label{sec-cyc}

Let $n\geq 1$ and let $\bl$ and $S_{\bl}$ be as in Section \ref{sec-defV}. We fix a map (we call it a \emph{multiplicity map})
\begin{equation}\label{def-m}
m\ :\ S_{\bl}\to \Z_{\geq0}\ ,
\end{equation}
such that only a finite number of multiplicities $m(i)$ are different from $0$.
\begin{definition}
We define the cyclotomic quotient denoted $H(B_n)_{\bl,m}$ to be the quotient of the algebra $\hH(B_n)$ over the relation
\begin{equation}\label{cyc-rel}
\prod_{i\in S_{\bl}}(X_1-i)^{m(i)}=0\ .
\end{equation}
\end{definition}

\begin{definition}
Let $\beta$ be a finite union of $W(B_n)$-orbits in $(S_{\bl})^n$. We define the cyclotomic quotient denoted by $V_{\bl,m}^{\beta}$ to be the quotient of $V_{\bl}^{\beta}$ by the relation
\begin{equation}\label{cycV}
y_1^{m(i_1)}\idmep=0 \ \ \ \textrm{ for every } \textbf{i}=(i_1,\dots,i_n) \in \beta\ .
\end{equation}
Let $\mathcal{O}$ be the set of orbits in $(S_{\bl})^n$ under the action of the Weyl group $W(B_n)$. We set:
\begin{equation}\label{direct-sums}
V_{\bl}^{n}:=\bigoplus_{\beta\in\mathcal{O}}V_{\bl}^{\beta}\ \ \ \ \text{and}\ \ \ \ V_{\bl,m}^{n}:=\bigoplus_{\beta\in\mathcal{O}}V_{\bl,m}^{\beta}\ .
\end{equation}
\end{definition}

We record here a fundamental lemma (the proof in \cite[Lemma 2.1]{BK} can be repeated verbatim).
\begin{lemma}\label{lem-nil}
The elements $y_k\in V_{\bl,m}^n$, $k=1,\dots,n$, are nilpotent.
\end{lemma}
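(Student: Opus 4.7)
I follow the verbatim argument of \cite[Lemma 2.1]{BK}: induct on $k$, showing $y_k^N e(\ti) = 0$ for each $\ti \in \beta$ and some $N$ depending on $\ti$ and $k$; summing over the finite set $\beta$ then gives $y_k^{N_{\max}} = 0$ in $V_{\bl, m}^n$.

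For $k = 1$, the cyclotomic relation (\ref{cycV}) gives directly $y_1^{m(i_1)} e(\ti) = 0$; since only finitely many of the multiplicities $m(i)$ are nonzero and $\beta$ is finite, $N_1 := \max_{\ti \in \beta} m(i_1)$ is a finite integer with $y_1^{N_1} = \sum_{\ti \in \beta} y_1^{N_1} e(\ti) = 0$.

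For the inductive step, assume $y_{k-1}$ is nilpotent with $y_{k-1}^M = 0$. Relation (\ref{Rel:V4}) with $b = k-1$, $j = k$ reads $\psi_{k-1} y_k e(\ti) = y_{k-1} \psi_{k-1} e(\ti) + \delta_{i_{k-1}, i_k} e(\ti)$, and iterates to
\[\psi_{k-1} y_k^N e(\ti) \;=\; y_{k-1}^N \psi_{k-1} e(\ti) \;+\; \delta_{i_{k-1}, i_k} \sum_{j=0}^{N-1} y_{k-1}^j y_k^{N-1-j} e(\ti).\]
For $N \geq M$ the first term on the right vanishes by the inductive hypothesis. Invoking (\ref{Rel:V6}) to rewrite $\psi_{k-1}^2 e(\ti) = P_{\ti}(y_{k-1}, y_k) e(\ti)$ for an explicit polynomial $P_\ti$ depending on the arrow configuration between $i_{k-1}$ and $i_k$, and combining with the above, yields a polynomial identity in $y_{k-1}, y_k$ on $e(\ti)$.

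When $i_{k-1} \neq i_k$ (so $\delta_{i_{k-1}, i_k} = 0$), the resulting identity reads $y_k^N P_{\ti}(y_{k-1}, y_k) e(\ti) = 0$, and in each of the four sub-cases of (\ref{Rel:V6}) a brief iteration using the nilpotency of $y_{k-1}$ bounds a power of $y_k$ on $e(\ti)$; for instance when $i_{k-1} \to i_k$ one obtains $y_k^{N+1} e(\ti) = y_k^N y_{k-1} e(\ti)$ and an easy induction gives $y_k^{N+M} e(\ti) = 0$. The main obstacle is the case $i_{k-1} = i_k$, where $P_\ti = 0$ and so the correction sum cannot be discarded. Here one must multiply further by $\psi_{k-1}$ and iteratively substitute using both directions of (\ref{Rel:V4}); each step introduces an additional factor of $y_{k-1}$ into every summand of the resulting linear combination, so after at most $M$ iterations every term contains $y_{k-1}^M = 0$, forcing $y_k^N e(\ti) = 0$ for $N$ sufficiently large.
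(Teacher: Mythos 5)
The inductive step is handled correctly only in the case $i_{k-1}\neq i_k$; the crucial case $i_{k-1}=i_k$ has a genuine gap. First, the claim that each further multiplication by $\psi_{k-1}$ ``introduces an additional factor of $y_{k-1}$ into every summand'' is false: from Relation~(\ref{Rel:V4}) one gets, for $N\geq M$,
\begin{equation*}
\psi_{k-1}\,y_k^N\,e(\ti)\;=\;\sum_{a+b=N-1}y_{k-1}^{\,a}\,y_k^{\,b}\,e(\ti)\,,
\end{equation*}
and the summand $a=0$, namely $y_k^{N-1}e(\ti)$, carries no factor of $y_{k-1}$ at all; the same phenomenon repeats at every stage, since the Demazure-type correction $\frac{F-{}^{r_{k-1}}F}{y_k-y_{k-1}}$ lowers the total degree without raising the $y_{k-1}$-degree of every term. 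Second, and more fundamentally, even if every term eventually acquired a factor $y_{k-1}^M$, the conclusion drawn would be $\psi_{k-1}^{\,m}\,y_k^N\,e(\ti)=0$ for some $m$, and this does not give $y_k^N e(\ti)=0$ because $\psi_{k-1}$ is not injective on $e(\ti)$ (indeed $\psi_{k-1}^2 e(\ti)=0$ precisely in this case). So multiplying by $\psi_{k-1}$ cannot be ``undone.''

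The argument in \cite[Lemma~2.1]{BK} that the paper invokes handles $i_{k-1}=i_k$ by a different device, which is the one missing here: one checks, using Relations~(\ref{Rel:V4}) and~(\ref{Rel:V6}), that the intertwining element $u:=\bigl(\psi_{k-1}(y_k-y_{k-1})-1\bigr)e(\ti)$ satisfies $u^2=e(\ti)$ and $u\,y_{k-1}\,e(\ti)=y_k\,u\,e(\ti)$. Since $u$ is a unit of $e(\ti)V^\beta_{\bl,m}e(\ti)$ (its own inverse) conjugating $y_{k-1}$ into $y_k$, the nilpotence $y_{k-1}^M e(\ti)=0$ transfers immediately: $y_k^M e(\ti)=u\,y_{k-1}^M\,u\,e(\ti)=0$. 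Replace the hand-waving in your last paragraph with this intertwiner computation and the proof is complete.
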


\begin{remark}\label{rem-mult}
Conjugating Relation (\ref{cycV}) by $\psi_0$, we obtain that $y_1^{m(i_1)}e(r_0(\ti))=0$ for all $\ti\in\beta$. This implies at once that the algebra $V_{\bl,m}^{\beta}$ is isomorphic to $V_{\bl,m'}^{\beta}$, where the new multiplicity map is given by $m'(i):=\text{min}\{m(i),m(i^{-1})\}$ for all $i\in S_{\bl}$. We have also $V_{\bl,m}^{n}$ isomorphic to $V_{\bl,m'}^{n}$.

Similarly, conjugating Relation (\ref{cyc-rel}) by $g_0$ and considering the minimal polynomial annihilating $X_1$ in $H(B_n)_{\bl,m}$, one obtains that $H(B_n)_{\bl,m}$ is in fact isomorphic to $H(B_n)_{\bl,m'}$ with $m'$ as before. Alternatively, this follows immediately from the preceding remark and Theorem \ref{theorem:type B iso when p^2=1}.

As a consequence of this remark, one could restrict the study of cyclotomic quotients to the situation where the multiplicity map $m$ satisfies $m(i^{-1})=m(i)$ for all $i\in S_{\bl}$. We will not do so as it is not really needed for our considerations (see Remark \ref{rem-mult2} though).

We also stress that this phenomenon is particular to the situation $p^2=1$. We refer to \cite[Proposition 3.2 and Remark 4.1]{PW} for similar considerations, with a different conclusion, in the situation $p^2\neq 1$.
\hfill$\triangle$
\end{remark}

\begin{remark}\label{rem-defcyc}
In the same spirit as in the discussion in Section \ref{sec-defV} leading to Formula (\ref{dec-V}), it is easy to see that the algebra $V_{\bl}^{n}$ can also be defined by generators and relations. The generators are as in Definition \ref{definition:vv algebra} with $\ti$ varying now over $(S_{\bl})^n$. The relations are as in Definition \ref{definition:vv algebra} with $\ti$ taken in $(S_{\bl})^n$ and with the first relation in (\ref{Rel:V1}) removed. The algebra $V_{\bl}^{n}$ is not unital if $S_{\bl}$ is not a finite set.

With this definition of $V_{\bl}^{n}$, the algebra $V_{\bl,m}^{n}$ can be seen as the quotient of $V_{\bl}^{n}$ over the relations (\ref{cycV}) with $\ti$ now varying over $(S_{\bl})^n$.\hfill$\triangle$
\end{remark}

\paragraph{Idempotents and blocks of $H(B_n)_{\bl, m}$.} In the same way as in \cite[Section 4.2]{PW}, we remark that $H(B_n)_{\bl, m}$ contains a finite set of non-trivial idempotents. Namely, we consider $H(B_n)_{\bl, m}$ as a finite-dimensional representation (by left multiplication) of the subalgebra of $H(B_n)_{\bl, m}$ generated by the commuting elements $X_1, \ldots, X_n$. From \cite[Proposition 3.1]{PW}, we have that all the eigenvalues of $X_1, \ldots, X_n$ belong to $(S_{\bl})^n$. We can decompose this representation into a direct sum of common generalised eigenspaces for the $X_i$:
\begin{equation*}
H(B_n)_{\bl, m}=\bigoplus_{\ti\in (S_{\bl})^n} M_{\ti}
\end{equation*}
where $M_{\ti}:=\{x\in H(B_n)_{\bl, m}\,\vert\ (X_k-i_k)^Nx=0\,,\ k=1,\dots,n\,,\ \text{for some $N>0$}\}$. We let 
\begin{equation*}
\{ e_{\ti}^H \}_{\ti \in (S_{\bl})^n}
\end{equation*}
denote the associated set of mutually orthogonal idempotents which, by definition, belong to the commutative subalgebra generated by $X_1, \ldots, X_n$.

Fora $W(B_n)$-orbit $\beta$ in $(S_{\bl})^n$, we set 
\begin{equation*}
e_\beta := \sum_{\ti \in \beta} e^H(\ti).
\end{equation*}
The element $e_\beta$ is therefore a central idempotent in $H(B_n)_{\bl, m}$. Therefore the set $e_\beta H(B_n)_{\bl, m}$ is either $\{0\}$ or an idempotent subalgebra (with unit $e_\beta$) which is a union of blocks of $H(B_n)_{\bl, m}$ and we have, similarly to (\ref{direct-sums});
\[H(B_n)_{\bl, m}=\bigoplus_{\beta\in\mathcal{O}}e_\beta H(B_n)_{\bl, m}\ .\]

\begin{theorem}\label{theorem:type B iso when p^2=1}
The algebras $H(B_n)_{\bl,m}$ and $V_{\bl,m}^{n}$ are isomorphic. More precisely, the algebras $e_\beta H(B_n)_{\bl, m}$ and $V^\beta_{\bl, m}$ are isomorphic, for any orbit $\beta\in\mathcal{O}$.
\end{theorem}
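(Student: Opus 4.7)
The plan is to construct mutually inverse algebra homomorphisms between $V^\beta_{\bl,m}$ and $e_\beta H(B_n)_{\bl,m}$ block-by-block, for each $W(B_n)$-orbit $\beta\in\mathcal{O}$; the global isomorphism $V^n_{\bl,m}\cong H(B_n)_{\bl,m}$ then follows by direct summing.

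For the forward map $\rho_\beta \colon V^\beta_{\bl,m} \to e_\beta H(B_n)_{\bl,m}$ I would set $\idmep \mapsto e^H(\ti)$ and, for $y_k$ and $\psi_b$ with $b\geq 1$, reuse the formulas of \cite{PW}: $y_k$ is sent to a nilpotent element built from $X_k-i_k$ on each $e^H(\ti)$-summand, and $\psi_b$ is sent to a renormalisation of $g_b$ by a rational expression in $X_b,X_{b+1}$ whose precise form depends case-by-case on the relative position of $i_b,i_{b+1}$ in $\Gamma_{\bl}$. Crucially, we set
\[
\psi_0\ \longmapsto\ g_0\,e_\beta\,,
\]
with no further correction. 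This is possible precisely because $p^2=1$ forces $p-p^{-1}=0$ and hence $g_0^2=1$ in $\hH(B_n)$, matching Relation~\eqref{Rel:V10} directly. This is the central simplification relative to the $p^2\neq 1$ setting of \cite{PW}.

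The relations \eqref{Rel:V1}--\eqref{Rel:V7} do not involve $\psi_0$ and reduce verbatim to checks from \cite{PW}. The $\psi_0$-relations are new: \eqref{Rel:V10} is $g_0^2=1$; \eqref{Rel:V9} follows from \eqref{rel-H8}; \eqref{Rel:V8} comes from \eqref{rel-H6} (equivalently $g_0 X_1 g_0 = X_1^{-1}$) once the formula for $y_1$ on the $e^H(r_0(\ti))$-summand, where $X_1$ has eigenvalue $i_1^{-1}$, is compared to that on the $e^H(\ti)$-summand; and the braid-type relation \eqref{Rel:V11} is the type-$B$ braid relation \eqref{rel-H2}, modulo the rational $\psi_1$-normalisation. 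The cyclotomic relation \eqref{cycV} follows directly from \eqref{cyc-rel} restricted to $e^H(\ti)$. For the inverse $\sigma_\beta$ I would send $X_k\,e_\beta$ to the element of $V^\beta_{\bl,m}$ obtained by formally inverting the definition of $y_k$ (well-defined by nilpotency, Lemma~\ref{lem-nil}), $g_b\,e_\beta$ for $b\geq 1$ to the inverse $\psi_b$-renormalisation as in \cite{PW}, and $g_0\,e_\beta \mapsto \psi_0$. The verification of \eqref{rel-H1}--\eqref{rel-H8} and \eqref{cyc-rel} splits into a $\psi_0$-free part, handled by \cite{PW}, and a part involving $g_0$, which mirrors the forward checks. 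Mutual inverseness is then immediate on generators.

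The main anticipated obstacle is the braid verification \eqref{Rel:V11}: because $\psi_1$ carries a rational correction switching between the cases $i_1=i_2$, $i_1\to i_2$, $i_1 \leftarrow i_2$, and $i_1\leftrightarrow i_2$, one must track carefully how conjugation by $g_0$ (which inverts $X_1$, fixes the $X_j$ for $j>1$, and flips the first component of $\ti$ in the idempotent indexing) interacts with each case of the normalisation, and show that the resulting identity collapses to \eqref{rel-H2}. All other $\psi_0$-checks are short; it is this braid-level compatibility between $g_0$-conjugation and the $\psi_1$-normalisation that concentrates the technical content of the theorem.
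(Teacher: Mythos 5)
Your proposal is essentially the paper's proof: define $\psi_0 \mapsto g_0 e_\beta$ with no renormalisation (justified precisely by $g_0^2=1$ when $p^2=1$), reuse the [PW] formulas for $y_k$ and the renormalised $\psi_b$ ($b\geq 1$), quote [PW] for all $\psi_0$-free relations, and concentrate the work on the braid compatibility \eqref{Rel:V11}/\eqref{rel-H2}. The paper carries out that braid check in two steps using a symmetry condition on the renormalisation factors (the second identity in \eqref{eq-Q3}, i.e.\ $Q_1(i,j)={}^{r_0r_1r_0}Q_1(j^{-1},i^{-1})$) together with the $r_1$-invariance of $P_1$; this is the concrete mechanism by which the $g_0$-conjugation you flag as the main obstacle is made to interact harmlessly with the $\psi_1$-normalisation, and you would need to make that explicit to close the argument.
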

\begin{proof}
We recall that we have $p^2=1$, and thus in particular we have the relations $g_0^2=1$, $g_0X_1=X_1^{-1}g_0$ in $\hH(B_n)$.
\\
To prove the theorem we prove here that $e_\beta H(B_n)_{\bl, m}$ and $V^\beta_{\bl, m}$ are isomorphic, for any orbit $\beta\in\mathcal{O}$. The theorem follows from this by taking direct sums over all orbits $\beta\in\mathcal{O}$. We will use similar constructions and arguments as in \cite{PW} which were inspired from \cite{BK}. Fortunately, many calculations have been done in \cite{PW} (which treated the situation $p^2\neq 1$) and can be used here as well. However, necessary modifications are needed when $p^2=1$. The resulting algebra $V_{\bl,m}^\beta$ is different and one cannot simply take $p^2=1$ in \cite{PW}.

\vskip .2cm
First, we lay out some notation. We let $f \in K[[z]]$ be the formal power series
\begin{equation*}
f(z):=z+\frac{z}{1-z}.
\end{equation*}
Then, since the degree 0 coefficient is zero and the degree 1 coefficient is non-zero, $f$ has a composition inverse $g=\sum_i b_iz^i\in K[[z]]$ with $b_0=0$ and $b_1\neq 0$. That is, $$f\bigl(g(z)\bigr)=g\bigl(f(z)\bigr)=z\ .$$

Next, we label elements of both $V^\beta_{\bl,m}$ and $H(B_n)_{\bl,m}$ in a way in which, at first, will appear redundant but which will be needed later. We set
\begin{equation*}
y_k^V:=y_k\ \ \text{and}\ \ e^{V}(\ti):=e(\ti)\ \ \ \text{in $V^\beta_{\bl,m}$,}\ \ \ \qquad\text{and}\ \ \ \qquad X_k^H:=X_k\ \ \ \text{in $H(B_n)_{\bl,m}$.}
\end{equation*}
for all $k=1,\ldots, n$, all $\ti\in\beta$ and all $s=0,1,\ldots,n-1$.

\paragraph{\textbf{Commuting elements.}} Let $k\in\{1,\dots,n\}$. We define elements of $H(B_n)_{\bl, m}$:
\begin{equation}\label{def-yH}
y^H_i:=\displaystyle\sum_{\ti \in \beta}f(1-i_k^{-1}X_k) e^H(\ti)=\displaystyle\sum_{\ti \in \beta}(i_kX_k^{-1}-i_k^{-1}X_k) e^H(\ti)\ .
\end{equation}
These elements are well-defined since $(1-i_k^{-1}X_k)e^H(\ti)$ is nilpotent for any $\ti\in\beta$, by definition of the idempotents $e^H(\ti)$. Moreover, the elements $y^H_k$ commute and they are nilpotent since the formal power series $f$ has no constant term.

Note that by definition of $f$ and $g$ we have then that $X_k=\sum_{\textbf{i} \in \beta}i_k\bigl(1-g(y^H_k)\bigr)e(\ti)$ in $H(B_n)_{\bl, m}$. Therefore, conversely, we define elements of $V^\beta_{\bl,m}$;
\begin{equation}\label{def-XV}
X_k^V:=\sum_{\textbf{i} \in \beta}i_k\bigl(1-g(y_k)\bigr)\idmep\in V^\beta_{\bl,m}\ .
\end{equation}
These elements are well-defined since the elements $y_k$ are nilpotent. Moreover, the elements $X_k^V$ commute and they are invertible since $g$ has no constant term.

\paragraph{\textbf{Notations.}} 
%
%
$\bullet$ We introduce a symbol $\some$ which we will use as an upper index to denote both $V$ and $H$. Let $k\in\{1,\dots,n\}$ and $i\in S_{\bl}$. We set
\begin{equation}\label{nota1}
X^{\some}_k(i):=i\bigl(1-g(y^{\some}_k)\bigr)\ ,
\end{equation}
so that we have $X^{\some}_ke^{\some}(\ti)=X^{\some}_k(i_k)e^{\some}(\ti)$ for any $\ti\in \beta$.

\vskip .1cm $\bullet$ Let $Y^{\some}\in K[[y^{\some}_1,\dots,y^{\some}_n]$. The element $Y^V$ is a well-defined element of $V^\beta_{\bl,m}$ since $y^{V}_1,\dots,y^{V}_n$ commute and are nilpotent by Lemma \ref{lem-nil}, and the element $Y^H$ is a well-defined element of $H(B_n)_{\bl,m}$ since $y^{H}_1,\dots,y^{H}_n$ commute and are nilpotent by construction. Such an element $Y^{\some}$ is invertible if and only if its 0 degree coefficient is non-zero. In the following, when we write a rational fraction in $y^{\some}_1,\dots,y^{\some}_n$, we mean (often implicitly) that we have checked that it is well-defined.

For example, the element $X^{\some}_k(i)$ of the preceding item is invertible since $g$ has no degree 0 term.

Another example used explicitly later is the following. Let $k\in\{1,\dots,n-1\}$ and $i,j\in S_{\bl}$. Then the element $(1-X_k^\some(i)X_{k+1}^\some(j)^{-1})$ is invertible whenever $i\neq j$.

\paragraph{\textbf{Renormalisation elements.}} We fix a family of invertible power series $Q_k(i,j) \in K[[z, z']]$, where $k=1,\dots,n-1$ and $i,j\in S_{\bl}$, and we set for $\some \in \{ V, H \}$:
$$Q^{\some}_k(i,j):=Q_k(i,j)\bigl(y^{\some}_k,y^{\some}_{k+1}\bigr) \in K[[y^{\some}_k,y^{\some}_{k+1}]]\ .$$

Recall that the Weyl group $W(B_n)$ acts on a formal power series $f$ in $y^{\some}_1,\dots,y^{\some}_n$ (for $k=1,\dots,n-1$, $r_k$ acts by exchanging $y^{\some}_k$ and $y^{\some}_{k+1}$ and $r_0$ acts by multiplying $y_1^\some$ by $-1$),  and we denote ${}^wf$ the action of $w\in W(B_n)$ on $f$. 

We assume that the following properties are satisfied for any $i,j\in S_{\bl}$ and $k=1,\dots,n-1$,
\begin{equation}\label{eq-Q1}
Q_k^{\some}(i,j)=i^{-1}\big( q^{-1}X^{\some}_k(i)-qX^{\some}_{k+1}(j) \big)\displaystyle\frac{y^{\some}_{k+1}-y^{\some}_k}{g(y^{\some}_{k+1})-g(y^{\some}_k)}\ \ \ \ \ \textrm{ if } i=j\,,
\end{equation}
\begin{equation}\label{eq-Q2}
{^{r_k}}Q^{\some}_k(j,i)Q^{\some}_k(i,j)=\left\lbrace
\begin{array}{ll}
\Gamma_k^{\some}(i,j) & \textrm{ if }i \nleftrightarrow j \vspace*{0.6em}\\
\Gamma_k^{\some}(i,j)\big( y_{k+1}^{\some}-y_k^{\some} \big)^{-1} & \textrm{ if }i \leftarrow j \vspace*{0.6em}\\
\Gamma_k^{\some}(i,j)\big( y_{k}^{\some}-y_{k+1}^{\some} \big)^{-1} & \textrm{ if }i \rightarrow j \vspace*{0.6em}\\
\Gamma_k^{\some}(i,j)\big( y_{k+1}^{\some}-y_k^{\some} \big)^{-1}\big( y_k^{\some}-y_{k+1}^{\some} \big)^{-1} & \textrm{ if }i \leftrightarrow j 
\end{array}
\right.
\end{equation}
and, for $k=1,\dots,n-2$,
\begin{equation}\label{eq-Q3}
{^{r_k}}Q^{\some}_{k+1}(i,j)={^{r_{k+1}}}Q^{\some}_k(i,j)\ \ \ \ \text{and}\ \ \ \ Q_1(i,j)={}^{r_0r_1r_0}Q_1(j^{-1},i^{-1})\ ,
\end{equation}
where 
\begin{equation*}
\Gamma_k(i,j) = \frac{(qX^\some_k(i)-q^{-1}X^\some_{k+1}(j))(qX^\some_{k+1}(j)-q^{-1}X^\some_k(i))}{(X^\some_k(i)-X^\some_{k+1}(j))(X^\some_{k+1}(j)-X^\some_k(i))}.
\end{equation*}

By \cite[Lemma 7.3]{PW}, such a family of invertible power series satisfying Conditions (\ref{eq-Q1})--(\ref{eq-Q3}) exists, and can be chosen explicitly.

\paragraph{\textbf{Algebra homomorphisms.}} We also set, for $k=1,\dots,n-1$ and $i,j\in S_{\bl}$,
\begin{equation}\label{def-Pk}
P_k^\some(i,j):= \left\lbrace
\begin{array}{ll}
q^{-1} & \textrm{ if }i=j\,, \\
-(q-q^{-1})(1-X_k^\some(i)X_{k+1}^\some(j)^{-1})^{-1} & \textrm{ if }i\neq j\ .
\end{array}
\right.
\end{equation}

Now in $e_{\beta}H(B_n)_{\blambda,m}$ we set, where $k=1,\dots,n-1$,
\begin{equation}\label{def-psiH}
\psi_0^H:=g_0e_{\beta}\,,\ \ \ \ \ \psi_k^H := \displaystyle\sum_{\ti \in \beta} \Bigl(g_k+P_k^H(i_k,i_{k+1})\Bigr) \Bigl(Q^H_k(i_k,i_{k+1})\Bigr)^{-1} e^H(\ti)\ ,
\end{equation}
and conversely in $V^\beta_{\bl, m}$, 
\begin{equation}\label{def-gV}
g_0^V:=\psi_0\,,\ \ \ \ \ g_k^{V}:=\displaystyle\sum_{\ti \in \beta}\Bigl(\psi_kQ_k^V(i_k,i_{k+1}) - P_k^V(i_k,i_{k+1})\Bigr)\idmep\ .
\end{equation}
Finally, using the definitions we made in (\ref{def-yH}), (\ref{def-XV}) and (\ref{def-psiH})--(\ref{def-gV}), we can introduce the following maps, where $j\in\{1,\dots,n\}$, $k\in\{0,\dots,n-1\}$ and $\ti\in\beta$:
\begin{equation}\label{iso-maps}
\rho\ :\ \ \begin{array}{rcl}
V^\beta_{\bl, m} & \to & e_{\beta}H(B_n)_{\blambda,m}\\[0.5em]
e(\ti) & \mapsto & e^H(\ti)\\[0.2em]
y_j & \mapsto & y^H_j\\[0.2em]
\psi_k & \mapsto & \psi_k^H
\end{array}\ 
\ \ \quad \text{and}\quad \ \ \ 
\sigma\ :\ \ \begin{array}{rcl}
\hH(B_n) & \to & V^\beta_{\bl, m}\\[0.5em]
X_i & \mapsto & X_i^V\\[0.2em]
g_k & \mapsto & g_k^{V}
\end{array}\ .
\end{equation}
Assuming for a moment that these two maps extend to algebra homomorphisms, still denoted by $\rho$ and $\sigma$, we will show that $\rho$ has a 2-sided inverse. This inverse is obtained from the fact that $\sigma$ factors through the natural map $\hH(B_n) \twoheadrightarrow e_\beta H(B_n)_{\bl, m}$ which is obtained by composing the natural surjection to the quotient with multiplication by $e_\beta$. First, we must verify that the cyclotomic relation (\ref{cyc-rel}) holds in $V^\beta_{\bl, m}$. We have, using that $\sigma$ is an algebra morphism and the orthogonality of the idempotents in $V^\beta_{\bl,m}$,
\begin{equation*}
\sigma \left(\prod_{j \in S_{\bl}}(X_1-j)^{m(j)}\right)=\sum_{\ti \in \beta}\left(\prod_{j \in S_{\bl}}\big(i_1(1-g(y_1))-j\big)^{m(j)}\idmep\right).
\end{equation*}
For each $\ti\in\beta$ take $j=i_1$ in the product above and note that $g(y_1)^{m(i_1)}e(\ti)=0$ since $g$ has no constant term and $(y_1)^{m(i_1)}e(\ti)=0$ by definition in (\ref{cycV}). So every summand in the above expression has a factor which vanishes. Therefore $\sigma$ factors through $\hH(B_n) \twoheadrightarrow H(B_n)_{\bl, m}$. To show that $\sigma$ can be further factored through $H(B_n)_{\bl,m} \twoheadrightarrow e_\beta H(B_n)_{\bl, m}$ we must show that $\sigma(e^H(\ti))=0$ whenever $\ti \not\in \beta$. This follows the same reasoning as in \cite[Lemma 3.4]{BK} or \cite[Lemma 6.9]{PW} and we do not repeat the details.

Finally, abusing notation, let us still denote by $\sigma$ the morphism $e_\beta H(B_n)_{\bl, m} \longrightarrow V^\beta_{\bl,m}$. We check that $\sigma \circ \rho = \textrm{Id}_{e_\beta H(B_n)_{\bl, m}}$ and $\rho\circ\sigma = \textrm{Id}_{V^\beta_{\bl,m}}$ by easily verifying these equalities on the generators.

\vskip .2cm
To conclude the proof of the theorem, it remains to show that the maps $\rho$ and $\sigma$ do extend to algebra homomorphisms. Note that the relations of $V^\beta_{\bl,m}$ not involving the generator $\psi_0$ are the same as the ones appearing in \cite{PW}, and the maps $\rho$ and $\sigma$ restricted to all the generators but $\psi_0$ and $g_0$ coincide with the corresponding ones in \cite{PW}. So in fact many relations have been shown to be preserved by $\rho$ and $\sigma$ in \cite{PW}. It remains only to consider the relations involving $g_0$ for $\sigma$, and the relations involving $\psi_0$ for $\rho$.

\paragraph{\textbf{$\sigma$ is an algebra homomorphism.}} It suffices to verify that the relations (\ref{rel-H1}), (\ref{rel-H2}), (\ref{rel-H4}), (\ref{rel-H6}), (\ref{rel-H8}) are preserved, where $i=0$ in (\ref{rel-H1}), (\ref{rel-H4}), (\ref{rel-H8}).
\\[0.2cm]
\textbf{Relation (\ref{rel-H1}) with $i=0$}. Recall that $q_0=p$ and that $p^2=1$. We have directly $(g_0^V)^2 = \psi_0^2 = 1$.
\\[0.2cm]
\textbf{Relations (\ref{rel-H4}) and (\ref{rel-H8}) with $i=0$}. We have in $V^\beta_{\bl,m}$ that $\psi_0$ commutes with $\psi_j$ and $y_j$ if $j>1$. Moreover, in Formulas (\ref{def-XV}) and (\ref{def-gV}), the coefficients in front of $e(\ti)$ and $e(r_0(\ti))$ are equal. It follows that $g_0^V=\psi_0$ commutes with $X_j^V$ and $g_j^V$ for $j>1$.
\\[0.2cm]
\textbf{Relation (\ref{rel-H6})}. To check that $\sigma$ preserves this relation we require Lemma 7.2 from \cite{PW}. This states that the formal power series $g$ satisfies
\begin{equation*}
\frac{1}{(1-g(z))}=1-g(-z).
\end{equation*}
We have:
\begin{equation*}
\begin{split}
g_0^V(X_1^V)^{-1}g_0^V\idmep &= \psi_0i_1(1-g(y_1))^{-1}\psi_0\idmep \\
&= \psi_0i_1(1-g(-y_1))\psi_0\idmep \\
&=i_1(1-g(y_1))\idmep \\
&=X^V_1\idmep.
\end{split}
\end{equation*}
Here we have used the defining relations $\psi_0^2=1$ and $\psi_0y_1=-y_1\psi_0$.
\\[0.2cm]
\textbf{Relation (\ref{rel-H2})}. We will check this relation in two steps. First note that, from (\ref{Rel:V4}), we have in $V^\beta_{\bl,m}$
\begin{equation}\label{commut1}
(\psi_1F - {^{r_1}}F\psi_1)\idmep=\delta_{i_1,i_{2}}\frac{F-{^{r_1}}F}{y_{2}-y_1}\idmep
\end{equation}
for any $F\in K[[y_1,\dots,y_n]]$ and $\ti\in\beta$. We also have $\psi_0F={^{r_0}}F\psi_0$ from (\ref{Rel:V8}).

In $V^\beta_{\bl,m}$, we set 
$$\Phi^V_1:=\displaystyle\sum_{\ti\in\beta} \psi_1Q^V_1(i_1,i_2)e(\ti)=\displaystyle\sum_{\ti\in\beta}\Bigl(g^V_1+P_1^V(i_1,i_2)\Bigr)e(\ti)\ .$$
\\[0.1cm]
\textbf{Step 1.} The first step is to show the relation $\psi_0\Phi^V_1\psi_0\Phi^V_1=\Phi^V_1\psi_0\Phi^V_1\psi_0$. We remove the superscript $V$ for brevity.

We take $\ti\in\beta$ and set $i:=i_1$ and $j:=i_2$. We have:
\begin{multline*}
\bigl(\psi_0\Phi_1\psi_0\Phi_1-\Phi_1\psi_0\Phi_1\psi_0\bigr)e(\ti)=\Bigl(\psi_0\psi_1Q_1(j^{-1},i)\psi_0\psi_1Q_1(i,j)-\psi_1Q_1(j^{-1},i^{-1})\psi_0\psi_1Q_1(i^{-1},j)\psi_0\Bigr)e(\ti)\\
=\psi_0\psi_1\psi_0\cdot {}^{r_0}Q_1(j^{-1},i)\psi_1e(\ti)\cdot Q_1(i,j) - \psi_1\psi_0\cdot {}^{r_0}Q_1(j^{-1},i^{-1})\psi_1e(r_0(\ti))\cdot \psi_0{}^{r_0}Q_1(i^{-1},j)\ .
\end{multline*}
Now, if $i=j$, we find using Condition (\ref{eq-Q3}) that ${}^{r_1r_0}Q_1(j^{-1},i)={}^{r_0}Q_1(j^{-1},i)$. Therefore, using (\ref{commut1}), we have ${}^{r_0}Q_1(j^{-1},i)\psi_1e(\ti)=\psi_1{}^{r_1r_0}Q_1(j^{-1},i)e(\ti)$ for every $\ti\in\beta$. 

A similar reasoning shows that we also have ${}^{r_0}Q_1(j^{-1},i^{-1})\psi_1e(r_0(\ti))=\psi_1{}^{r_1r_0}Q_1(j^{-1},i^{-1})e(r_0(\ti))$ for every $\ti\in\beta$.
We conclude that:
\begin{multline*}
\bigl(\psi_0\Phi_1\psi_0\Phi_1-\Phi_1\psi_0\Phi_1\psi_0\bigr)e(\ti)\\
=\Bigl(\psi_0\psi_1\psi_0\psi_1{}^{r_1r_0}Q_1(j^{-1},i)Q_1(i,j)-\psi_1\psi_0\psi_1\psi_0{}^{r_0r_1r_0}Q_1(j^{-1},i^{-1}){}^{r_0}Q_1(i^{-1},j)\Bigr)e(\ti)\ .
\end{multline*}
We again use Condition (\ref{eq-Q3}) to see that the coefficients ${}^{r_1r_0}Q_1(j^{-1},i)Q_1(i,j)$ and ${}^{r_0r_1r_0}Q_1(j^{-1},i^{-1}){}^{r_0}Q_1(i^{-1},j)$ are in fact equal, and this concludes step 1.
\\[0.1cm]
\textbf{Step 2.} To conclude, we must show that $\psi_0g^V_1\psi_0g^V_1=g^V_1\psi_0g^V_1\psi_0$, which is equivalent to showing
\begin{equation}\label{psiPhi}
\psi_0\bigl(\Phi_1-P_1(j^{-1},i)\bigr)\psi_0\bigl(\Phi_1-P_1(i,j)\bigr)e(\ti)=\bigl(\Phi_1-P_1(j^{-1},i^{-1})\bigr)\psi_0\bigl(\Phi_1-P_1(i^{-1},j)\bigr)\psi_0e(\ti)\ .
\end{equation}
From the definition of $P_1$ in (\ref{def-Pk}), a short straightforward verification gives that, for every $i',j'\in S_{\bl}$,
\begin{equation}\label{eq:P-property-1}
P_1(i',j')={^{r_0r_1r_0}}P_1(i',j')=P_1(j'^{-1},i'^{-1})={^{r_0r_1r_0}}P_1(j'^{-1},i'^{-1})\ .
\end{equation} 
This shows in particular that ${}^{r_0}P_1(i',j')$ is invariant under the action of $r_1$ and thus commutes with $\psi_1$ for any $i',j'\in S_{\bl}$. So expanding the left hand side of (\ref{psiPhi}), we find (we omit the idempotent $e(\ti)$ here):
\[\psi_0\Phi_1\psi_0\Phi_1-\Phi_1{}^{r_0}P_1(j^{-1},i)-\psi_0\Phi_1\psi_0P_1(i,j)+{}^{r_0}P_1(j^{-1},i)P_1(i,j)\ ,\]
while the right hand side gives:
\[\Phi_1\psi_0\Phi_1\psi_0-\Phi_1{}^{r_0}P_1(i^{-1},j)-\psi_0\Phi_1\psi_0{}^{r_0r_1r_0}P_1(i,j)+{}^{r_0r_1r_0}P_1(i,j){}^{r_0}P_1(i^{-1},j)\ .\]
We conclude, using step 1 and the equalities (\ref{eq:P-property-1}), that these two expressions are equal. 

This concludes the verification of Relation (\ref{rel-H2}) and in turn the verification that $\sigma$ extends to an algebra homomorphism.

\paragraph{\textbf{$\rho$ is an algebra morphism.}} It suffices to prove that only the relations involving $\psi_0$ are preserved so we will now verify that relations (\ref{Rel:V3}) and (\ref{Rel:V5}) with $a=0$, and (\ref{Rel:V8})--(\ref{Rel:V11}) are preserved by $\rho$.
\\[0.2cm]
\textbf{Relation (\ref{Rel:V3}) for $a=0$}. Recall that in $H(B_n)_{\bl, m}$, since $p^2=1$, we have $g_0P={}^{r_0}Pg_0$ for any $P\in K[X_1^{\pm1},\dots,X_n]$, where $r_0$ acts on $P$ by inverting $X_1$. This means that if $x\in H(B_n)_{\bl, m}$ is such that $Px=0$ then $g_0x$ is annihilated by ${}^{r_0}P$. Then from the definition of the idempotents $e^H(\ti)$ of $H(B_n)_{\bl, m}$, this means that we have $g_0e^H(\ti)=e^H(r_0(\ti))g_0$, which is the relation required for checking (\ref{Rel:V3}) .
\\[0.2cm]
\textbf{Relations (\ref{Rel:V5}) for $a=0$ and (\ref{Rel:V9})}. In $H(B_n)_{\bl,m}$, we have that $g_0$ commutes with $g_b$ and $X_b$ if $b>1$. Moreover, in (\ref{def-yH}) and (\ref{def-psiH}), the coefficients in front of $e(\ti)$ and $e(r_0(\ti))$ are the same. It follows that $\psi^H_0=g_0$ commutes with $y^H_b$ and with $\psi^H_b$ if $b>1$.
\\[0.2cm]
\textbf{Relations (\ref{Rel:V8})}. For relation (\ref{Rel:V8}) we have, using the explicit form of $f$, 
\begin{equation*}
(\psi_0^Hy_1^H+y_1^H\psi_0^H)e^H(\ti) = (g_0(i_1X_1^{-1}-i_1^{-1}X_1) + (i_1^{-1}X_1^{-1}-i_1X_1)g_0)e^H(\ti) = 0.
\end{equation*}
We have used that $g_0X_1^{\pm1}=X_1^{\mp1}g_0$ in $H(B_n)_{\bl, m}$.
\\[0.2cm]
\textbf{Relation (\ref{Rel:V10})} is directly seen to be preserved since $(\psi^H_0)^2=g_0^2=1$.
\\[0.2cm]
\textbf{Relation (\ref{Rel:V11})}. In $e_{\beta}H(B_n)_{\bl, m}$, we set 
$$\Phi^H_1:=\displaystyle\sum_{\ti\in\beta}(g_1+P^H_1(i_1,i_2))e^H(\ti)=\displaystyle\sum_{\ti\in\beta}\psi^H_1Q_1^H(i_1,i_2)e^H(\ti)\ .$$
\\[0.1cm]
\textbf{Step 1.} The first step is to show the relation $g_0\Phi^H_1g_0\Phi^H_1=\Phi^H_1g_0\Phi^H_1g_0$. For this step, we replace $\Phi_1^H$ by its expression in terms of $g_1$ and $P_1$ and we use the exact same reasoning as in step 2 of the verification of Relation (\ref{rel-H2}), with $g_0,g_1$ replacing $\psi_0, \Phi_1$.
\\[0.1cm]
\textbf{Step 2.} Then to conclude, we must show that $g_0\psi^H_1g_0\psi^H_1=\psi^H_1g_0\psi^H_1g_0$ which is equivalent to showing
\[\Bigl(g_0\Phi^H_1Q^H_1(j^{-1},i)^{-1}g_0\Phi^H_1Q_1(i,j)^{-1}-\Phi^H_1Q^H_1(j^{-1},i^{-1})^{-1}g_0\Phi^H_1Q^H_1(i^{-1},j)^{-1}g_0\Bigr)e^H(\ti)\,,\]
for any $\ti\in\beta$, where we set $(i,j):=(i_1,i_2)$. To do this, we use the exact same reasoning as in step 1 of the verification of Relation (\ref{rel-H2}), with $g_0, \Phi^H_1$ replacing $\psi_0, \psi_1$. We do not repeat it, we only point out that at this point we know that we have in $H(B_n)_{\bl, m}$
\[
(\psi^H_1F - {^{r_1}}F\psi^H_1)e^H(\ti)=\delta_{i_1,i_{2}}\frac{F-{^{r_1}}F}{y^H_{2}-y^H_1}e^H(\ti)
\]
for any $F\in K[[y^H_1,\dots,y^H_n]]$ and $\ti\in\beta$, which is equivalent to
\[
(\Phi^H_1F - {^{r_1}}F\Phi^H_1)e^H(\ti)=\delta_{i_1,i_{2}}Q_1^H(i_1,i_2)\frac{F-{^{r_1}}F}{y^H_{2}-y^H_1}e^H(\ti)\ .
\]
This formula replaces Formula (\ref{commut1}) which was used in the reasoning in step 1 of the verification of Relation (\ref{rel-H2}).
\\[0.4cm]
This concludes the verification that $\rho$ extends to an algebra homomorphism and in turn the proof of the theorem.
\end{proof}

\section{Cyclotomic quotients of $\hH(D_n)$ and $W_{\bl}^{\delta}$}\label{sec-cycD}

Let $n\geq 2$ and let $\bl$ and $S_{\bl}$ be as in Sections \ref{sec-defV} and \ref{sec-cyc}. Take a multiplicity map $m$ as in (\ref{def-m}). We give definitions for type $D$ which are analogous to the ones given at the beginning of Section \ref{sec-cyc}.
\begin{definition}\label{def-cycD}
We define the cyclotomic quotient, denoted $H(D_n)_{\bl,m}$, to be the quotient of the algebra $\hH(D_n)$ over the relation
\begin{equation}\label{cyc-relD}
\prod_{i\in S_{\bl}}(X_1-i)^{m(i)}=0\ .
\end{equation}
\end{definition}

\begin{definition}\label{def-cyc-quivD}
Let $\delta$ be a finite union of $W(D_n)$-orbits in $(S_{\bl})^n$. We define the cyclotomic quotient, denoted by $W_{\bl,m}^{\delta}$, to be the quotient of $W_{\bl}^{\delta}$ by the relation
\begin{equation}\label{cycW}
y_1^{m(i_1)}\idmep=0 \ \ \ \textrm{ for every } \textbf{i}=(i_1,\dots,i_n) \in \delta\ .
\end{equation}
Let $\mathcal{O}'$ be the set of orbits in $(S_{\bl})^n$ under the action of the Weyl group $W(D_n)$. We set:
\begin{equation}\label{direct-sums2}
W_{\bl}^{n}:=\bigoplus_{\delta\in\mathcal{O}'}W_{\bl}^{\delta}\ \ \ \ \text{and}\ \ \ \ W_{\bl,m}^{n}:=\bigoplus_{\delta\in\mathcal{O}'}W_{\bl,m}^{\delta}\ .
\end{equation}
\end{definition}
Note that, by grouping some summands in the direct sums above (as explained in Remark \ref{rem-orb}), we also have
\[
W_{\bl}^{n}:=\bigoplus_{\beta\in\mathcal{O}}W_{\bl}^{\beta}\ \ \ \ \text{and}\ \ \ \ W_{\bl,m}^{n}:=\bigoplus_{\beta\in\mathcal{O}}W_{\bl,m}^{\beta}\ ,
\]
where $\mathcal{O}$ is the set of $W(B_n)$-orbits in $(S_{\bl})^n$.

\begin{remark}
A remark similar to Remark \ref{rem-defcyc} applies here as well about presenting $W_{\bl}^{n}$ directly by generators and relations and seeing $W_{\bl,m}^{n}$ directly as a quotient of $W_{\bl}^{n}$.\hfill$\triangle$
\end{remark}

\subsection{Cyclotomic quotients $H(D_n)_{\bl,m}$ and $W_{\bl,m}^{\delta}$ as subalgebras of fixed points.}

In this subsection, under some condition on the multiplicity map $m$, we relate the cyclotomic quotients $H(D_n)_{\bl,m}$ and $W_{\bl,m}^{n}$ to the corresponding cyclotomic quotients $H(B_n)_{\bl,m}$ and $V_{\bl,m}^{n}$. 

We will use the following general statement.
\begin{lemma}\label{general-lemma}
Let $A$ be a $K$-algebra equipped with an involutive automorphism $\theta$, and denote by $A^{\theta}$ the set of fixed points. Let $J$ be a two-sided ideal of $A$ such that $\theta(J)\subset J$, and denote by $J^{\theta}$ the set of fixed points of $J$. Then we have
\[A^{\theta}/J^{\theta}\cong (A/J)^{\tilde{\theta}}\ ,\]
where $\tilde{\theta}$ is the involutive automorphism of $A/J$ given by $\tilde{\theta}(x+J)=\theta(x)+J$.
\end{lemma}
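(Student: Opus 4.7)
The plan is to define the natural candidate homomorphism $A^{\theta}\to (A/J)^{\tilde{\theta}}$ as the restriction to $A^{\theta}$ of the quotient map $A\to A/J$, and then verify the three standard properties: well-definedness of both the map and of $\tilde{\theta}$, identification of the kernel with $J^{\theta}$, and surjectivity. The first is immediate: the hypothesis $\theta(J)\subset J$ (and hence, by applying $\theta$ again, $\theta(J)=J$) guarantees that $\tilde{\theta}$ is a well-defined involutive automorphism of $A/J$, and any $x\in A^{\theta}$ satisfies $\tilde{\theta}(x+J)=\theta(x)+J=x+J$, so $x+J\in(A/J)^{\tilde{\theta}}$.

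For the kernel, if $x\in A^{\theta}$ maps to $0$ in $A/J$ then $x\in J$, hence $x\in J\cap A^{\theta}=J^{\theta}$; the reverse inclusion is trivial. So it remains to establish surjectivity, which is the only nontrivial point. The idea is to use the averaging projector, which is available because $K$ has characteristic different from $2$ (as stipulated in the Notation paragraph).

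Given a class $y+J\in (A/J)^{\tilde{\theta}}$, we have $\theta(y)-y\in J$. I would then set
\[x:=\tfrac{1}{2}\bigl(y+\theta(y)\bigr)\in A\,.\]
Since $\theta$ is an involution, $\theta(x)=\tfrac{1}{2}\bigl(\theta(y)+y\bigr)=x$, so $x\in A^{\theta}$. Moreover,
\[x-y=\tfrac{1}{2}\bigl(\theta(y)-y\bigr)\in J\,,\]
so $x+J=y+J$, proving surjectivity. Combined with the kernel computation, the first isomorphism theorem gives the claimed isomorphism $A^{\theta}/J^{\theta}\cong (A/J)^{\tilde{\theta}}$.

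The only place where genuine care is required is the surjectivity step, and the whole difficulty there is packaged into the fact that the averaging idempotent $\tfrac{1}{2}(\mathrm{id}+\theta)$ exists, which relies on $\mathrm{char}(K)\neq 2$. In characteristic $2$ the statement generally fails, so this hypothesis must be invoked explicitly. Everything else is formal and relies only on $\theta(J)\subset J$ to make both $\tilde{\theta}$ and the restriction map meaningful.
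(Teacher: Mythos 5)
Your proposal is correct and follows essentially the same route as the paper: restrict the canonical quotient map to $A^{\theta}$, verify the image lies in $(A/J)^{\tilde{\theta}}$, identify the kernel with $J^{\theta}$, and obtain surjectivity by the averaging element $\tfrac{1}{2}(y+\theta(y))$, which requires $\mathrm{char}(K)\neq 2$.
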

\begin{proof}
First, we note that the map $\tilde{\theta}$ is well-defined because if $x-y\in J$ then $\theta(x)-\theta(y)=\theta(x-y)$ also belongs to $J$ since $J$ is stable under $\theta$. Then $\tilde{\theta}$ is clearly an involutive automorphism of $A/J$.

Now consider the map
\begin{equation*}
\begin{split}
\pi\ :\ A^\theta &\longrightarrow A/J \\
x &\mapsto x + J.
\end{split}
\end{equation*}
The map $\pi$ is the restriction to $A^\theta$ of the canonical surjective morphism from $A$ to its quotient $A/J$.

First we claim that the image of $\pi$ is the subalgebra of fixed points $(A/J)^{\tilde{\theta}}$. Indeed for one inclusion, if $x\in A^\theta$ then $\tilde{\theta}(x+J)=\theta(x)+J=x+J$.

For the reverse inclusion, take any $h+J \in (A/J)^{\tilde{\theta}}$. Then we have $\theta(h)+J=h+J$ which implies $\frac{1}{2}(\theta(h)+h)+J=h+J$ (the characteristic of $K$ is different from 2). So we have that $\frac{1}{2}(h+\theta(h)) \in A^\theta$ and its image under $\pi$ is $h+J$.

Finally, the kernel of $\pi$ is the intersection of $J$ with $A^\theta$. That is, we have $\textrm{ker}(\pi)=J^\theta$.
\end{proof}

\paragraph{\textbf{Cyclotomic quotients $H(D_n)_{\bl,m}$ as subalgebras of fixed points of $H(B_n)_{\bl,m}$.}} Recall the involutive automorphism $\eta$ of $\hH(B_n)$ defined by:
\begin{equation}\label{eta2}
\eta\ :\ \ \ \ g_0\mapsto -g_0\,,\ \ \ \ g_i\mapsto g_i\,,\ i=1,\dots,n-1,\ \ \ \ \ \ X^x\mapsto X^x\ .
\end{equation}
Let $J \subset \hH(B_n)$ be the two-sided ideal generated by the element
\begin{equation}\label{ele-quot}
P(X_1)=\prod_{i\in S_{\bl}} (X_1-i)^{m(i)}
\end{equation}
so that $H(B_n)_{\bl, m}= \hH(B_n)/J$ by definition. Clearly we have that $\eta(J)\subset J$ since the element (\ref{ele-quot}) is fixed by $\eta$.

Note that, slightly abusing notation, we use the same names for elements of the quotient $H(B_n)_{\bl,m}$ as for elements of $\hH(B_n)$. With this notation, the involutive automorphism $\tilde{\eta}$ of $H(B_n)_{\bl,m}$ appearing in the conclusion of Lemma \ref{general-lemma} is simply given by the same formulas:
\begin{equation}\label{tilde-eta}
\tilde{\eta}\ :\ \ \ \ g_0\mapsto -g_0\,,\ \ \ \ g_i\mapsto g_i\,,\ i=1,\dots,n-1,\ \ \ \ \ \ X^x\mapsto X^x\ .
\end{equation}

\begin{proposition}\label{prop-fixed-points1}
Assume that $m$ satisfies $m(i^{-1})=m(i)$ for any $i\in S_{\bl}$. Then the algebra $H(D_n)_{\bl, m}$ is isomorphic to $(H(B_n)_{\bl,m})^{\tilde{\eta}}$, with $\tilde{\eta}$ given in (\ref{tilde-eta}).
\end{proposition}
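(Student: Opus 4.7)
The plan is to apply Lemma~\ref{general-lemma} with $A=\hH(B_n)$, the involution $\theta=\eta$ from (\ref{eta2}), and $J\subset\hH(B_n)$ the two-sided ideal generated by $P(X_1):=\prod_{i\in S_{\bl}}(X_1-i)^{m(i)}$. By construction $A/J=H(B_n)_{\bl,m}$; the identification $A^\eta\cong\hH(D_n)$ has been established at the end of Section~\ref{sec-def}; and $J$ is $\eta$-stable since $\eta$ fixes $X_1$ hence $P(X_1)$. Lemma~\ref{general-lemma} would then give $(H(B_n)_{\bl,m})^{\tilde\eta}\cong\hH(D_n)/J^\eta$, so the task reduces to identifying $J^\eta$ with the two-sided ideal $J'$ of $\hH(D_n)$ generated by $P(X_1)$.

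The inclusion $J'\subset J^\eta$ is automatic, since $J'\subset\hH(D_n)\cap J=A^\eta\cap J$. For the reverse inclusion, I would first exploit the hypothesis $m(i^{-1})=m(i)$. Setting $M:=\sum_{i\in S_{\bl}}m(i)$ and $\mu:=(-1)^M\prod_{i\in S_{\bl}}i^{m(i)}\in K^{\times}$, the substitution $j=i^{-1}$ in the product defining $P$ yields the identity of Laurent polynomials $P(z^{-1})=\mu z^{-M}P(z)$. Combined with $g_0X_1g_0=X_1^{-1}$ (relation~(\ref{rel-H6}), valid because $p^2=1$), this gives
\[
g_0P(X_1)g_0=P(X_1^{-1})=\mu X_1^{-M}P(X_1),
\]
and since $X_1^{-1}\in\hH(D_n)$ the right-hand side lies in $J'$.

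Next I would use the eigenspace decomposition $\hH(B_n)=\hH(D_n)\oplus g_0\hH(D_n)$ for $\eta$, which follows from $g_0^2=1$ and $\hH(D_n)=A^\eta$. Since conjugation by $g_0$ commutes with $\eta$, it preserves $\hH(D_n)$, so $g_0\hH(D_n)=\hH(D_n)g_0$. Writing $J=\hH(B_n)P(X_1)\hH(B_n)$ and expanding both outer factors along the above decomposition produces four summands; in each one, I would move the $g_0$'s to the outside using $g_0\hH(D_n)=\hH(D_n)g_0$ and the key identity $g_0P(X_1)g_0\in J'$ from the previous step. This shows that each summand lies in $J'$ or in $g_0J'$, hence $J\subset J'+g_0J'$, while the reverse inclusion is obvious. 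Because $J'\subset\hH(D_n)$ and $g_0J'\subset g_0\hH(D_n)$ intersect trivially, this sum is direct, and splits $J$ into its $\pm1$-eigenspaces for $\eta$. Taking $\eta$-fixed parts yields $J^\eta=J'$, as required.

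I expect the main obstacle to be the bookkeeping in the four-summand expansion of $J$: each factor of $g_0$ has to be moved outside in such a way as to leave $P(X_1)$ or $P(X_1^{-1})$ sandwiched between elements of $\hH(D_n)$, which relies precisely on the identity $g_0P(X_1)g_0\in J'$. This identity, and hence the whole argument, breaks without the hypothesis $m(i^{-1})=m(i)$ (although by Remark~\ref{rem-mult} one can always harmlessly reduce to this case).
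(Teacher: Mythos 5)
Your argument is correct and follows essentially the same route as the paper's proof: apply Lemma~\ref{general-lemma} to reduce the problem to showing $J^{\eta}=J'$, observe that $J'\subset J^{\eta}$ is immediate, and for the reverse inclusion exploit the identity $g_0P(X_1)g_0=P(X_1^{-1})\in J'$, which is exactly where the hypothesis $m(i^{-1})=m(i)$ is used. The only difference is in how the reverse inclusion is packaged. The paper expands an arbitrary $j\in J^{\eta}$ in terms of the basis (\ref{base-aff}), splits the basis into $\eta$-eigenvectors $\mathcal{B}^{\pm}$, uses $\eta$-invariance to discard the cross terms, and inserts $g_0^2$ between basis vectors of $\mathcal{B}^-$ to land in $J'$. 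You instead work with the $\eta$-eigenspace decomposition $\hH(B_n)=\hH(D_n)\oplus g_0\hH(D_n)$ and the identity $g_0\hH(D_n)=\hH(D_n)g_0$ to expand $J=\hH(B_n)P(X_1)\hH(B_n)$ directly as the direct sum $J'\oplus g_0J'$ of $\eta$-eigenspaces, from which $J^{\eta}=J'$ is immediate. This is a slightly more conceptual rendering of the same computation --- it avoids invoking the explicit PBW-type basis --- but the underlying mechanism (the decomposition into $\eta$-eigenspaces, plus the key identity $P(X_1^{-1})\in J'$) is identical.
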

\begin{proof} First, we have by Lemma \ref{general-lemma} that $(H(B_n)_{\bl,m})^{\tilde{\eta}}$ is isomorphic to $\hH(B_n)^{\eta}/J^{\eta}$.

Besides, we can identify $\hH(D_n)$ with the subalgebra of fixed points $\hH(B_n)^{\eta}$ (see the end of Section \ref{sec-def}). Under this identification, the elements $X^x$ in $\hH(D_n)$ are sent to elements $X^x$ in $\hH(B_n)$. Thus from Definition \ref{def-cycD}, we have that $H(D_n)_{\bl, m}$ is the quotient of $\hH(B_n)^{\eta}$ by the ideal generated by the element $P(X_1)$ in (\ref{ele-quot}). Denote by $J'$ this ideal of $\hH(B_n)^{\eta}$. We must then prove that $J'=J^{\eta}$.

For the inclusion $J'\subset J^{\eta}$, an element of $J'$ is a linear combination of elements of the form $j=xP(X_1)y$ with $x,y\in \hH(B_n)^{\eta}$. Such an element $j$ lies obviously in $J$ and moreover $\eta(j)=j$ since $\eta\bigl(P(X_1)\bigr)=P(X_1)$ and $x,y\in \hH(B_n)^{\eta}$.

For the reverse inclusion, let $\mathcal{B}$ be the basis in (\ref{base-aff}) of $\hH(B_n)$ consisting of elements $X^xg_w$, with $x\in L$ and $w\in W(B_n)$. The set $\mathcal{B}$ splits into two disjoint subsets 
$$\mathcal{B}=\mathcal{B}^+\cup\mathcal{B}^-\,,\ \ \ \ \ \text{where $\mathcal{B}^{\pm}=\{b\in\mathcal{B}\ |\ \eta(b)=\pm b\}$.}$$
The subset $\mathcal{B}^+$ is formed by the elements $X^xg_w$ with $w$ in the subgroup $W(D_n)$ of $W(B_n)$.

Now let $j\in J^{\eta}$ and write $j=\sum_{b,b'\in\mathcal{B}} c_{b,b'}bP(X_1)b'$ for some coefficients $c_{b,b'}\in K$. The condition $\eta(j)=j$ implies that
\[2\Bigl(\sum_{\substack{b\in\mathcal{B}^+\\ b'\in\mathcal{B}^-}}c_{b,b'}bP(X_1)b'+\sum_{\substack{b\in\mathcal{B}^-\\ b'\in\mathcal{B}^+}}c_{b,b'}bP(X_1)b'\Bigr)=0\ ,\]
and since the characteristic of $K$ is different from 2, we obtain
\[j=\sum_{b,b'\in\mathcal{B}^+}c_{b,b'}bP(X_1)b'+\sum_{b,b'\in\mathcal{B}^-}c_{b,b'}bP(X_1)b'\ .\]
The first sum is already in $J'$ since $b,b'\in \mathcal{B}^+\subset \hH(B_n)^{\eta}$. So for $b,b'\in\mathcal{B}^-$ it remains to show that $bP(X_1)b'\in J'$. To do this we write
\[bP(X_1)b'=bg_0^2P(X_1)b'=bg_0P(X_1^{-1})g_0b'\ .\]
We note that $bg_0$ and $g_0b'$ are in $\hH(B_n)^{\eta}$ and, using the assumption on the multiplicity map $m$, 
\[P(X_1^{-1})=\prod_{i\in S_{\bl}} (X_1^{-1}-i)^{m(i)}=\prod_{i\in S_{\bl}}(-iX_1^{-1})^{m(i)} (X_1-i^{-1})^{m(i)}=\prod_{i\in S_{\bl}}(-iX_1^{-1})^{m(i)}\cdot P(X_1)\ .\]
This shows that $P(X_1^{-1})\in J'$ since it is of the form $xP(X_1)$ with $x\in \hH(B_n)^{\eta}$. Therefore we have $bP(X_1)b'\in J'$ and this concludes the proof.
\end{proof}

\paragraph{\textbf{Cyclotomic quotients $W^{\beta}_{\bl,m}$ as subalgebras of fixed points of $V^{\beta}_{\bl,m}$.}} Let $\beta$ be a $W(B_n)$-orbit in $(S_{\bl})^n$, so that all algebras $V^{\beta}_{\bl}, V^{\beta}_{\bl,m}, W^{\beta}_{\bl},W^{\beta}_{\bl,m}$ are defined.

Recall the involutive automorphism $\rho$ of $V^{\beta}_{\bl}$ defined by:
\begin{equation}\label{rho}
\rho\ :\ \ \ \psi_0\mapsto-\psi_0\,,\ \ \ \psi_b\mapsto\psi_b\,,\ b=1,\dots,n-1,\ \ \ \ y_j\mapsto y_j\,,\ j=1,\dots,n,\ \ \ \ \ e(\ti)\mapsto e(\ti)\,,\ \ti\in\beta\ .
\end{equation}
Let $J \subset V^{\beta}_{\bl}$ be the two-sided ideal generated by the elements
\begin{equation}\label{ele-quot2}
P_{\ti}(y_1)=y_1^{m(i_1)}\idmep \ \ \ \textrm{ for } \textbf{i}=(i_1,\dots,i_n) \in \beta\ 
\end{equation}
so that $V^{\beta}_{\bl,m}= V^{\beta}_{\bl}/J$ by definition. We clearly have that $\rho(J)\subset J$ since every element in (\ref{ele-quot2}) is fixed by $\rho$.

Note that, slightly abusing notation, we use the same names for elements of the quotient $V^{\beta}_{\bl,m}$ as for elements of $V^{\beta}_{\bl}$. With this notation, the involutive automorphism $\tilde{\rho}$ of $V^{\beta}_{\bl,m}$ appearing in the conclusion of Lemma \ref{general-lemma} is simply given by the same formulas:
\begin{equation}\label{tilde-rho}
\tilde{\rho}\ :\ \ \ \psi_0\mapsto-\psi_0\,,\ \ \ \psi_b\mapsto\psi_b\,,\ b=1,\dots,n-1,\ \ \ \ y_j\mapsto y_j\,,\ j=1,\dots,n,\ \ \ \ \ e(\ti)\mapsto e(\ti)\,,\ \ti\in\beta\ .
\end{equation}

\begin{proposition}\label{prop-fixed-points2}
Assume that $m$ satisfies $m(i^{-1})=m(i)$ for any $i\in S_{\bl}$. Then the algebra $W^{\beta}_{\bl,m}$ is isomorphic to $\bigl(V_{\bl,m}^{\beta}\bigr)^{\tilde{\rho}}$, with $\tilde{\rho}$ given in (\ref{tilde-rho}).
\end{proposition}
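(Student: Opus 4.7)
The approach will mirror closely the proof of Proposition \ref{prop-fixed-points1}. By Theorem \ref{prop:fixed point VV is SVV}, there is an isomorphism $\phi : W^\beta_{\bl} \to (V^\beta_{\bl})^{\rho}$ which sends each generator $y_1^{m(i_1)}e(\ti)$ of the defining ideal of $W^\beta_{\bl,m}$ to the element of the same name inside $(V^\beta_{\bl})^{\rho}$. Let $J \subset V^\beta_{\bl}$ denote the two-sided ideal generated by $\{P_{\ti}(y_1):=y_1^{m(i_1)}e(\ti)\}_{\ti\in\beta}$, so that $V^\beta_{\bl,m}=V^\beta_{\bl}/J$. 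Each $P_{\ti}(y_1)$ is $\rho$-fixed, so $\rho(J)\subset J$ and Lemma \ref{general-lemma} yields $(V^\beta_{\bl,m})^{\tilde{\rho}}\cong (V^\beta_{\bl})^{\rho}/J^{\rho}$. Consequently the proposition reduces to the identity $J^{\rho}=J'$, where $J'$ is the two-sided ideal of $(V^\beta_{\bl})^{\rho}$ generated by $\{P_{\ti}(y_1)\}_{\ti\in\beta}$.

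The inclusion $J'\subset J^{\rho}$ is immediate. For the reverse inclusion I would use the basis $\mathcal{B}$ of $V^\beta_{\bl}$ from Theorem \ref{basis-theorem1}. The key observation is that $\rho$ acts diagonally on this basis: for $b=y_1^{m_1}\cdots y_n^{m_n}\psi_w e(\ti)\in \mathcal{B}$ one has $\rho(b)=\chi(w)\,b$, where $\chi:W(B_n)\to\{\pm1\}$ is the character sending $r_0\mapsto-1$, $r_k\mapsto 1$ for $k\geq 1$. This gives a decomposition $\mathcal{B}=\mathcal{B}^+\sqcup\mathcal{B}^-$ according to the sign, with $\mathcal{B}^+$ consisting of the basis elements indexed by $w\in\iota\bigl(W(D_n)\bigr)$.

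Given $j\in J^{\rho}$, write $j=\sum c_{b,\ti,b'}\,b\,P_{\ti}(y_1)\,b'$ with $b,b'\in\mathcal{B}$. Applying $\rho$ (which fixes each $P_{\ti}(y_1)$) and averaging $j$ with $\rho(j)=j$ in characteristic different from $2$, I recover $j$ as the restricted sum over pairs $(b,b')$ of equal parity. For $b,b'\in\mathcal{B}^+$ the summand already sits in $J'$ because $b,b'\in (V^\beta_{\bl})^{\rho}$. For $b,b'\in\mathcal{B}^-$ I use the defining relation $\psi_0^2=1$ to rewrite
\[
b\,P_{\ti}(y_1)\,b'=(b\psi_0)\bigl(\psi_0 P_{\ti}(y_1)\psi_0\bigr)(\psi_0 b').
\]
Using $\psi_0 y_1=-y_1\psi_0$ and $\psi_0 e(\ti)=e(r_0(\ti))\psi_0$, a direct calculation gives $\psi_0 P_{\ti}(y_1)\psi_0=(-1)^{m(i_1)}\,y_1^{m(i_1)}e(r_0(\ti))$. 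Invoking the hypothesis $m(i_1^{-1})=m(i_1)$, this equals $(-1)^{m(i_1)}P_{r_0(\ti)}(y_1)$, i.e.\ a generator of $J'$ up to sign. Moreover, since $\rho(\psi_0)=-\psi_0$ and $b,b'\in\mathcal{B}^-$, both $b\psi_0$ and $\psi_0 b'$ are $\rho$-fixed, so the whole product belongs to $J'$. Thus $j\in J'$, establishing the desired equality.

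The main obstacle, exactly parallel to that encountered in Proposition \ref{prop-fixed-points1}, is the treatment of the $\mathcal{B}^-\times\mathcal{B}^-$ contributions: the hypothesis $m(i^{-1})=m(i)$ is precisely what is needed to identify $\psi_0 P_{\ti}(y_1)\psi_0$ (up to sign) with another cyclotomic generator $P_{r_0(\ti)}(y_1)$, after conjugation by the involution $\psi_0$ switches $y_1\mapsto -y_1$ and $e(\ti)\mapsto e(r_0(\ti))$.
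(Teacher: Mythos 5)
Your proposal is correct and follows essentially the same route as the paper's own proof: reduce via Lemma \ref{general-lemma} and Theorem \ref{prop:fixed point VV is SVV} to the identity $J^\rho=J'$, split the basis $\mathcal{B}$ of Theorem \ref{basis-theorem1} into $\rho$-eigenspaces, use characteristic $\neq 2$ to discard the mixed-parity contributions, and conjugate the $\mathcal{B}^-\times\mathcal{B}^-$ terms by $\psi_0$ to land back in $J'$ using the hypothesis $m(i^{-1})=m(i)$. The only cosmetic difference is where you insert $\psi_0^2$ before commuting one factor through; the computation and the conclusion are identical.
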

\begin{proof} The proof is similar to the proof of Proposition \ref{prop-fixed-points1}. First, we have by Lemma \ref{general-lemma} that $\bigl(V_{\bl,m}^{\beta}\bigr)^{\tilde{\rho}}$ is isomorphic to $\bigl(V_{\bl}^{\beta}\bigr)^{\rho}/J^{\rho}$.

Besides, by Theorem \ref{prop:fixed point VV is SVV}, we can identify $W^{\beta}_{\bl}$ with the subalgebra of fixed points $\bigl(V_{\bl}^{\beta}\bigr)^{\rho}$. Under this identification, the element $y_1$ in $W^{\beta}_{\bl}$ is sent to the element $y_1$ in $V_{\bl}^{\beta}$. Thus from Definition \ref{def-cyc-quivD}, we have that $W^{\beta}_{\bl,m}$ is the quotient of $\bigl(V_{\bl}^{\beta}\bigr)^{\rho}$ by the ideal generated by the elements $P_{\ti}(y_1)$ in (\ref{ele-quot2}). Denote by $J'$ this ideal of $\bigl(V_{\bl}^{\beta}\bigr)^{\rho}$. We must then prove that $J'=J^{\rho}$.

For the inclusion $J'\subset J^{\rho}$, an element of $J'$ is a linear combination of elements of the form $j=xP_{\ti}(y_1)y$ with $\ti\in\beta$ and $x,y\in \bigl(V_{\bl}^{\beta}\bigr)^{\rho}$. Such an element $j$ obviously lies in $J$ and moreover $\rho(j)=j$ since $P_{\ti}(y_1)$ is invariant par $\rho$ and $x,y\in \bigl(V_{\bl}^{\beta}\bigr)^{\rho}$.

For the reverse inclusion, let $\mathcal{B}$ be the basis of $V_{\bl}^{\beta}$ in Theorem \ref{basis-theorem1}, namely consisting of elements $y_1^{m_1}\cdots y_n^{m_n}\psi_w\idmep$, where  $w \in W(B_n)$, $(m_1,\ldots,m_n) \in (\mathbb{Z}_{\geq 0})^n$ and $\textbf{i}\in \beta$. The set $\mathcal{B}$ splits into two disjoint subsets 
$$\mathcal{B}=\mathcal{B}^+\cup\mathcal{B}^-\,,\ \ \ \ \ \text{where $\mathcal{B}^{\pm}=\{b\in\mathcal{B}\ |\ \rho(b)=\pm b\}$.}$$
The subset $\mathcal{B}^+$ is formed by the basis elements where $w$ is in the subgroup $W(D_n)$ of $W(B_n)$.

Now let $j\in J^{\rho}$. Write $j=\sum_{\ti\in\beta}\sum_{b,b'\in\mathcal{B}} c_{\ti,b,b'}bP_{\ti}(y_1)b'$ for some coefficients $c_{\ti,b,b'}\in K$. The condition $\rho(j)=j$ implies that
\[2\sum_{\ti\in\beta}\Bigl(\sum_{\substack{b\in\mathcal{B}^+\\ b'\in\mathcal{B}^-}}c_{\ti,b,b'}bP_{\ti}(y_1)b'+\sum_{\substack{b\in\mathcal{B}^-\\ b'\in\mathcal{B}^+}}c_{\ti,b,b'}bP_{\ti}(y_1)b'\Bigr)=0\ ,\]
and since the characteristic of $K$ is different from 2, we obtain
\[j=\sum_{\ti\in\beta}\Bigl(\sum_{b,b'\in\mathcal{B}^+}c_{\ti,b,b'}bP_{\ti}(y_1)b'+\sum_{b,b'\in\mathcal{B}^-}c_{\ti,b,b'}bP_{\ti}(y_1)b'\Bigr)\ .\]
The first sum in the parenthesis is already in $J'$ since $b,b'\in \mathcal{B}^+\subset \bigl(V_{\bl}^{\beta}\bigr)^{\rho}$. So for $b,b'\in\mathcal{B}^-$ it remains to show that $bP_{\ti}(y_1)b'\in J'$. To do this we write
\[bP_{\ti}(y_1)b'=b\,y_1^{m(i_1)}e(\ti)\,b'=b\,\psi_0^2y_1^{m(i_1)}e(\ti)\,b'=b\psi_0\,(-y_1)^{m(i_1)}e\bigl(r_0(\ti)\bigr)\,\psi_0b'\ .\]
We note that $b\psi_0$ and $\psi_0b'$ are in $\bigl(V_{\bl}^{\beta}\bigr)^{\rho}$ and, using the assumption on the multiplicity map $m$, 
\[(-y_1)^{m(i_1)}e\bigl(r_0(\ti)\bigr)=(-1)^{m(i_1)}y_1^{m(i^{-1}_1)}e\bigl(r_0(\ti)\bigr)=(-1)^{m(i_1)}P_{r_0(\ti)}(y_1)\,\ .\]
This shows that $(-y_1)^{m(i_1)}e\bigl(r_0(\ti)\bigr)\in J'$. Therefore we have $bP_{\ti}(y_1)b'\in J'$ and this concludes the proof.
\end{proof}

\begin{remark}\label{rem-mult2}
It was noted in Remark \ref{rem-mult} that any cyclotomic quotient $H(B_n)_{\bl,m}$ or $V_{\bl,m}^{\beta}$ is in fact isomorphic to a cyclotomic quotient with the multiplicity map satisfying $m(i^{-1})=m(i)$. This does not have to hold for the cyclotomic quotients $H(D_n)_{\bl,m}$ or $W_{\bl,m}^{\beta}$, and this explains the presence of the assumption in Propositions \ref{prop-fixed-points1} and \ref{prop-fixed-points2}.
\hfill$\triangle$
\end{remark}

\paragraph{Idempotents and blocks of $H(D_n)_{\bl, m}$.} Following the same discussion as in the beginning of Section \ref{sec-cyc} for $H(B_n)_{\bl, m}$, we consider $H(D_n)_{\bl, m}$ as a finite-dimensional representation (by left multiplication) of the subalgebra generated by the commuting elements $X_1, \ldots, X_n$. From \cite[Proposition 3.1]{PW}, we have that all the eigenvalues of $X_1, \ldots, X_n$ belong to $(S_{\bl})^n$, so we can decompose this representation into a direct sum of common generalised eigenspaces for the $X_i$ as follows
\begin{equation*}
H(D_n)_{\bl, m}=\bigoplus_{\ti\in (S_{\bl})^n} M_{\ti}
\end{equation*}
where $M_{\ti}:=\{x\in H(D_n)_{\bl, m}\,\vert\ (X_k-i_k)^Nx=0\,,\ k=1,\dots,n\,,\ \text{for some $N>0$}\}$. We let 
\begin{equation*}
\{ e_{\ti}^H \}_{\ti \in (S_{\bl})^n}
\end{equation*}
denote the associated set of mutually orthogonal idempotents which, by definition, belong to the commutative subalgebra generated by $X_1, \ldots, X_n$. We consider the action of $W(D_n)$ on elements $\ti \in (S_{\bl})^n$ given in Section \ref{sec-defW}. Let $\delta$ denote an orbit in $(S_{\bl})^n$ with respect to this action and write
\begin{equation*}
e_\delta:=\sum_{\ti \in \delta}e^H(\ti).
\end{equation*}
The element $e_\delta$ is therefore a central idempotent in $H(D_n)_{\bl, m}$. Therefore the set $e_\delta H(D_n)_{\bl, m}$ either $\{0\}$ or an idempotent subalgebra (with unit $e_\delta$) which is a union of blocks of $H(D_n)_{\bl, m}$ and we have, similarly to (\ref{direct-sums2}),
\[H(D_n)_{\bl, m}=\bigoplus_{\delta\in\mathcal{O}'}e_\delta H(D_n)_{\bl, m}\ ,\]
$\mathcal{O}'$ is the set of $W(D_n)$-orbits in $(S_{\bl})^n$.

Note that in the theorem below we do not need the assumption on the multiplicity map present in Propositions \ref{prop-fixed-points1} and \ref{prop-fixed-points2}.
\begin{theorem}\label{theorem:main theorem}
The algebras $H(D_n)_{\bl,m}$ and $W_{\bl,m}^{n}$ are isomorphic.
\end{theorem}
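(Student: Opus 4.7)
I will prove the theorem block by block: for each $W(D_n)$-orbit $\delta\in\mathcal{O}'$ I will establish an isomorphism $e_\delta H(D_n)_{\bl,m}\cong W^\delta_{\bl,m}$, after which the theorem follows by summing over $\delta$ using $H(D_n)_{\bl,m}=\bigoplus_{\delta}e_\delta H(D_n)_{\bl,m}$ and $W^n_{\bl,m}=\bigoplus_\delta W^\delta_{\bl,m}$.

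First I would handle the symmetric subcase $m(i)=m(i^{-1})$: a direct check on generators shows that the isomorphism of Theorem~\ref{theorem:type B iso when p^2=1} intertwines the involutions $\tilde{\eta}$ and $\tilde{\rho}$ (both negate the ``zero-index'' generator and fix the rest), so combining with Propositions~\ref{prop-fixed-points1} and~\ref{prop-fixed-points2} yields
\[H(D_n)_{\bl,m}\cong (H(B_n)_{\bl,m})^{\tilde{\eta}}\cong (V^n_{\bl,m})^{\tilde{\rho}}\cong W^n_{\bl,m},\]
and the central idempotents $e_\delta$ (common generalised eigenspace projectors for the same commuting subalgebras on both sides) match throughout the chain. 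For general $m$, Remark~\ref{rem-mult} gives $H(B_n)_{\bl,m}=H(B_n)_{\bl,m'}$ and $V^n_{\bl,m}=V^n_{\bl,m'}$ with $m'(i):=\min\{m(i),m(i^{-1})\}$ symmetric, so the symmetric argument still produces $H(D_n)_{\bl,m'}\cong W^n_{\bl,m'}$; however Remark~\ref{rem-mult2} shows $H(D_n)_{\bl,m}$ and $W^n_{\bl,m}$ can be strictly larger than their $m'$-counterparts, so a direct construction is required for $m$ itself.

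For such a direct construction I would imitate, orbit by orbit, the Brundan--Kleshchev-style argument used in the proof of Theorem~\ref{theorem:type B iso when p^2=1}, adapted to type $D$: define nilpotent elements $y_k^H:=\sum_{\ti\in\delta}f(1-i_k^{-1}X_k)e^H(\ti)\in e_\delta H(D_n)_{\bl,m}$ and invertible elements $X_k^W:=\sum_\ti i_k(1-g(y_k))e(\ti)\in W^\delta_{\bl,m}$ using the same power series $f$ and $g$, renormalised generators $\Psi_k^H$ and $T_k^W$ for $k\geq 1$ following Section~\ref{sec-cyc}, together with new formulas for $\Psi_0^H$ built from $T_0$ and for $T_0^W$ built from $\Psi_0$. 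The correspondence of cyclotomic relations $\prod_i(X_1-i)^{m(i)}=0\leftrightarrow y_1^{m(i_1)}e(\ti)=0$ comes from $X_1-i_1=-i_1g(y_1)$ on $e(\ti)$ as in Theorem~\ref{theorem:type B iso when p^2=1}, and all relations not involving $T_0$ or $\Psi_0$ are verified exactly as there. The main obstacle is checking the relations (\ref{Rel:V8D})--(\ref{Rel:V11D}) for $\Psi_0$ and their Hecke counterparts: because in $\hH(D_n)$ one has $T_0X_1^{-1}T_0=X_2$ rather than $X_1^{-1}$, these relations entangle $y_1,y_2$ and $X_1,X_2$, and in particular the quadratic relation (\ref{Rel:V10D}) must be handled case-by-case according to the relative position of $i_1^{-1}$ and $i_2$ in the quiver $\Gamma_{\bl}$. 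I expect the verification to go through by expanding $(\Psi_0^H)^2$ and the cubic $(\Psi_0^H\Psi_2^H\Psi_0^H-\Psi_2^H\Psi_0^H\Psi_2^H)$ using the quadratic relation for $T_0$ together with the renormalisation conditions (\ref{eq-Q1})--(\ref{eq-Q3}).
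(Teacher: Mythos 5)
Your treatment of the symmetric case $m(i^{-1})=m(i)$ is correct and matches the paper: the isomorphism of Theorem~\ref{theorem:type B iso when p^2=1} intertwines $\tilde{\eta}$ and $\tilde{\rho}$ (both negate the zero-index generator and fix the others), so Propositions~\ref{prop-fixed-points1} and~\ref{prop-fixed-points2} close the chain. Your observation that Remark~\ref{rem-mult} with $m'=\min$ does not help — because $H(D_n)_{\bl,m}$ and $W^n_{\bl,m}$ may be strictly larger than their $m'$-counterparts — is also correct and is precisely the content of Remark~\ref{rem-mult2}.

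The gap is in what you do next. You conclude that ``a direct construction is required for $m$ itself'' and then sketch a Brundan--Kleshchev-style rebuild entirely inside type $D$, ending with ``I expect the verification to go through.'' That sketch is not a proof: you never write down $\Psi_0^H$ or $T_0^W$, and the quadratic and braid relations for $T_0$ in $\hH(D_n)$ (where $T_0X_1^{-1}T_0=X_2$, not $X_1$) behave very differently from those for $g_0$ in $\hH(B_n)$, so the renormalisation data $Q_k$, $P_k$ from Section~\ref{sec-cyc} cannot be reused unchanged and new conditions would need to be formulated and verified. More importantly, you have overlooked the trick the paper uses to avoid all of this. Instead of $m'(i)=\min\{m(i),m(i^{-1})\}$, set $\tilde{m}(i):=\max\{m(i),m(i^{-1})\}$. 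This $\tilde{m}$ is symmetric, so the symmetric argument gives an isomorphism $\Theta:H(D_n)_{\bl,\tilde{m}}\xrightarrow{\sim}W^n_{\bl,\tilde{m}}$. Since $\tilde{m}\geq m$ pointwise, the cyclotomic relations for $m$ are stronger, hence $H(D_n)_{\bl,m}$ and $W^n_{\bl,m}$ are quotients of the two $\tilde{m}$-algebras by the ideals $I_H=\langle\prod_i(X_1-i)^{m(i)}\rangle$ and $I_W=\langle y_1^{m(i_1)}e(\ti)\rangle$ respectively. The remaining work is then only to show $\Theta(I_H)=I_W$, which follows from $\Theta(X_1)e(\ti)=i_1(1-g(y_1))e(\ti)$ and $\Theta^{-1}(y_1)e^H(\ti)=f(1-i_1^{-1}X_1)e^H(\ti)$, using that $f$ and $g$ have no constant term and that $(X_1-i)e^H(\ti)$ is invertible on $K[X_1]e^H(\ti)$ for $i\neq i_1$. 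This reduction is short and avoids the heavy type-$D$ verification you anticipate; in the absence of it, your general case is incomplete.
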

\begin{proof}
We claim that we have the following commutative diagram:
\begin{center}
\begin{tikzpicture}
\matrix (m) [matrix of math nodes, row sep=4em,
column sep=5em, text height=1.5ex, text depth=0.25ex]
{H(B_n)_{\bl, m} & H(B_n)_{\bl, m} \\
V^n_{\bl, m} & V^n_{\bl, m} \\};
\path[-stealth]
(m-1-1) edge node [above] {$\tilde{\eta}$} (m-1-2)
(m-2-1) edge node [left] {$\sim$} (m-1-1)
(m-1-2) edge node [right] {$\sim$} (m-2-2)
(m-2-1) edge node [below] {$\tilde{\rho}$} (m-2-2);
\end{tikzpicture}
\end{center}
The vertical maps are the isomorphisms from Theorem \ref{theorem:type B iso when p^2=1} and the horizontal maps are the involutive automorphisms considered just above. The commutativity of the diagram follows directly from the explicit description of $\tilde{\eta}$ in (\ref{tilde-eta}) and $\tilde{\rho}$ in (\ref{tilde-rho}), together with the fact that the isomorphisms in Theorem \ref{theorem:type B iso when p^2=1} send $g_0$ to $\psi_0$ (and vice-versa) while neither $g_0$ nor $\psi_0$ appear in the images of the other generators; see the formulas in (\ref{iso-maps}).

From the commutative diagram we immediately deduce the algebra isomorphism
\begin{equation*}
(H(B_n)_{\bl,m})^{\tilde{\eta}} \cong \bigl(V_{\bl, m}^{n}\bigr)^{\tilde{\rho}}.
\end{equation*}

Now assume that the multiplicity map satisfies $m(i^{-1})=m(i)$ for any $i\in S_{\bl}$. Then, by Proposition \ref{prop-fixed-points1} the left-hand side is isomorphic to $H(D_n)_{\bl, m}$ and, by Proposition \ref{prop-fixed-points2}, taking direct sums over $\beta\in\mathcal{O}$, the right-hand side is isomorphic to $W^n_{\bl,m}$ thus proving the result.

Now consider the general situation with no assumption on $m$. We define a new multiplicity map $\tilde{m}$ by $\tilde{m}(i)=\text{max}\{m(i),m(i^{-1})\}$ for all $i\in S_{\bl}$. Then this multiplicity map satisfies $\tilde{m}(i^{-1})=\tilde{m}(i)$ for all $i\in S_{\bl}$, and the first part of the proof shows that
\begin{equation}\label{iso-interD}
H(D_n)_{\bl,\tilde{m}} \cong W_{\bl, \tilde{m}}^{n}\ .
\end{equation}
Then in $H(D_n)_{\bl,\tilde{m}}$, denote $I_H$ the ideal generated by the element
\[P(X_1)=\prod_{i\in S_{\bl}} (X_1-i)^{m(i)}\ ,\]
and in $W_{\bl, \tilde{m}}^{n}$, denote $I_W$ the ideal generated by the elements
\[P_{\ti}(y_1)=y_1^{m(i_1)}\idmep \ \ \ \textrm{ for } \textbf{i} \in (S_{\bl})^n\ .\]
Since $\tilde{m}(i)\geq m(i)$ for all $i\in S_{\bl}$, we have from the definitions that
\[H(D_n)_{\bl,m}=H(D_n)_{\bl,\tilde{m}}/I_H\ \ \ \ \ \text{and}\ \ \ \ \ \ W_{\bl, m}^{n}=W_{\bl, \tilde{m}}^{n}/I_W\ .\]
So it remains to show that $I_H$ is sent to $I_W$ by the isomorphism in (\ref{iso-interD}). Denoting this isomorphism by $\Theta$, we recall from the proof of Theorem \ref{theorem:type B iso when p^2=1} (namely the formulas in (\ref{iso-maps})) that we have:
\[\Theta(X_1)=\sum_{\ti\in (S_{\bl})^n}i_1(1-g(y_1))e(\ti)\ \ \ \ \ \ \ \text{and}\ \ \ \ \ \ \ \Theta^{-1}(y_1)=\sum_{\ti\in (S_{\bl})^n}f(1-i_1^{-1}X_1)e^{H}(\ti)\ ,\]
where $f$ is the power series $z+\frac{z}{1-z}$ and $g$ is its composition inverse. Both $f$ and $g$ have a constant term equal to 0.

For the inclusion $\Theta(I_H)\subset I_W$, we have, using that $e(\ti)$ are orthogonal idempotents,
\[\begin{array}{ll}
\Theta(P(X_1)) & =\displaystyle\prod_{i\in S_{\bl}}\Bigl(\sum_{\ti\in (S_{\bl})^n}(i_1-i_1g(y_1)-i)e(\ti)\Bigr)^{m(i)}\\[1.4em]
 & = \displaystyle\sum_{\ti\in (S_{\bl})^n}\prod_{i\in S_{\bl}}(i_1-i_1g(y_1)-i)^{m(i)}e(\ti)\ .
 \end{array}\]
For each $\ti\in (S_{\bl})^n$, pick the factor corresponding to $i=i_1$ in the product. Since $g$ has no constant term, this factor is proportional to $y_1^{m(i_1)}e(\ti)$ and therefore in $I_W$. We conclude that $\Theta(P(X_1))\in I_W$.

For the inclusion $\Theta^{-1}(I_W)\subset I_H$, let $\ti\in (S_{\bl})^n$. We have, using that $e^H(\textbf{j})$ are orthogonal idempotents,
\[\begin{array}{ll}
\Theta^{-1}(P_{\ti}(y_1)) & =\displaystyle\Bigl(\sum_{\textbf{j}\in (S_{\bl})^n}f(1-j_1^{-1}X_1)e^{H}(\textbf{j})\Bigr)^{m(i_1)}e^H(\ti)\\
 & = \displaystyle f(1-i_1^{-1}X_1)^{m(i_1)}e^H(\ti)\ .
 \end{array}\]
Since $f$ has no constant term, this is proportional to $(X_1-i_1)^{m(i_1)}e^H(\ti)$. Moreover, for any $i\neq i_1$, by construction of the idempotent $e^H(\ti)$, the element $(X_1-i)e^H(\ti)$ is invertible in $K[X_1]e^H(\ti)$. In other words, there exists a polynomial $R(X_1)$ in $X_1$ such that $R(X_1)(X_1-i)e^H(\ti)=e^H(\ti)$. For convenience, we denote here $R(X_1)$ by $(X_1-i)^{-1}$. With this notation, we have:
\[(X_1-i_1)^{m(i_1)}e^H(\ti)=\prod_{\substack{i\in S_{\bl}\\ i\neq i_1}}(X_1-i)^{-m(i)}\cdot P(X_1)e^H(\ti)\ .\]
Thus we see that $(X_1-i_1)^{m(i_1)}e^H(\ti)$ is proportional in $H(D_n)_{\bl,\tilde{m}}$ to the element $P(X_1)$. We conclude that $(X_1-i_1)^{m(i_1)}e^H(\ti)$ and in turn $\Theta^{-1}(P_{\ti}(y_1))$ is in the ideal $I_H$.

We conclude that $\Theta(I_H)=I_W$ and the proof of the theorem is complete.
\end{proof}

In fact, as in Theorem \ref{theorem:type B iso when p^2=1}, a more precise result is true describing each idempotent subalgebra $e_{\delta}H(D_n)_{\bl,m}$.

\begin{corollary}
The algebras $e_{\delta}H(D_n)_{\bl,m}$ and $W_{\bl,m}^{\delta}$ are isomorphic for any orbit $\delta\in\mathcal{O}'$.
\end{corollary}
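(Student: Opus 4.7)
The plan is to trace through the construction of the isomorphism $\Theta\colon H(D_n)_{\bl,m}\xrightarrow{\sim} W^n_{\bl,m}$ built in Theorem \ref{theorem:main theorem} and to verify that it maps the central idempotent $e_\delta=\sum_{\ti\in\delta}e^H(\ti)$ to the central idempotent $\sum_{\ti\in\delta}e(\ti)$ of $W^n_{\bl,m}$. Once this is established, $\Theta$ restricts to an isomorphism between the direct summands $e_\delta H(D_n)_{\bl,m}$ and $W^\delta_{\bl,m}$, which is the desired statement.

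The starting observation is that the isomorphism $H(B_n)_{\bl,m}\cong V^n_{\bl,m}$ of Theorem \ref{theorem:type B iso when p^2=1} already sends $e^H(\ti)\mapsto e(\ti)$ for every $\ti\in(S_{\bl})^n$: this correspondence is part of the definition of the map $\rho$ in (\ref{iso-maps}), and the inverse $\sigma$ sends $e^H(\ti)$ to $e(\ti)$ by the uniqueness of the idempotents in the commutative subalgebra generated by $X_1,\ldots,X_n$ together with the formula $\Theta^{-1}(y_1)=\sum_{\ti}f(1-i_1^{-1}X_1)e^H(\ti)$ recalled at the end of the proof of Theorem \ref{theorem:main theorem}.

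Next I would handle the case where $m(i^{-1})=m(i)$ for every $i\in S_{\bl}$. Here $\Theta$ is obtained by restricting the isomorphism $H(B_n)_{\bl,m}\cong V^n_{\bl,m}$ to fixed-point subalgebras with respect to $\tilde{\eta}$ and $\tilde{\rho}$, and then using Propositions \ref{prop-fixed-points1} and \ref{prop-fixed-points2}. Since $\tilde{\eta}$ and $\tilde{\rho}$ fix every $X_j$ and $y_j$ respectively, the idempotents $e^H(\ti)$ and $e(\ti)$ lie in the respective fixed-point subalgebras, so the identification $e^H(\ti)\leftrightarrow e(\ti)$ survives the restriction. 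Summing over $\ti\in\delta$ gives $\Theta(e_\delta)=\sum_{\ti\in\delta}e(\ti)$, as required.

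For the general case, the proof of Theorem \ref{theorem:main theorem} introduces $\tilde{m}(i):=\max\{m(i),m(i^{-1})\}$ and constructs $\Theta$ by descending the isomorphism $H(D_n)_{\bl,\tilde{m}}\cong W^n_{\bl,\tilde{m}}$ (provided by the first case) through the quotient by the compatible ideals $I_H$ and $I_W$. Since the images of the idempotents $e^H(\ti)$ in $H(D_n)_{\bl,m}$ remain the generalised eigenspace idempotents for $X_1,\ldots,X_n$ (the elements $X_j$ are unchanged by the quotient), and similarly $e(\ti)$ remains the orthogonal idempotent attached to $\ti$ in $W^n_{\bl,m}$, the correspondence $e^H(\ti)\leftrightarrow e(\ti)$ descends. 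Thus $\Theta(e_\delta)=\sum_{\ti\in\delta}e(\ti)$ in the general case as well. The only genuinely nontrivial point to check is that these images remain nonzero idempotents and are not collapsed by the quotient, but this is immediate from the compatibility of quotients with the commutative subalgebra generated by the $X_j$ (equivalently by the $y_j$) under $\Theta$. Restricting $\Theta$ to $e_\delta H(D_n)_{\bl,m}$ then yields the desired isomorphism onto $\bigl(\sum_{\ti\in\delta}e(\ti)\bigr)W^n_{\bl,m}=W^\delta_{\bl,m}$.
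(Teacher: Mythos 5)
Your proposal is correct and follows essentially the same approach as the paper: the paper's proof is simply the one-line observation that the isomorphism of Theorem \ref{theorem:main theorem} sends $\sum_{\ti\in\delta}e(\ti)$ to $e_{\delta}$ (which it reads off from the maps in (\ref{iso-maps})), and you have expanded this by tracing the idempotent correspondence through both stages of the construction (the fixed-point stage and the descent to general $m$). One small remark: your worry about idempotents ``not being collapsed by the quotient'' is a non-issue — the corollary holds verbatim even when some $e^H(\ti)$ and the corresponding $e(\ti)$ both become zero, since the induced isomorphism on quotients automatically carries $\bar{e^H(\ti)}$ to $\bar{e(\ti)}$ once $\Theta(I_H)=I_W$ is known; nonvanishing is neither needed nor in general true.
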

\begin{proof}
Let $\delta\in\mathcal{O}'$. The central idempotent in $W_{\bl,m}^{n}$ projecting onto $W_{\bl,m}^{\delta}$ in the direct sum (\ref{direct-sums2}) is $\sum_{\ti\in\delta}e(\ti)$. It is immediate to see (one has to look at the maps in (\ref{iso-maps})) that under the isomorphism of Theorem \ref{theorem:main theorem}, this element is sent to $e_{\delta}$ in $H(D_n)_{\bl,m}$. The corollary then follows directly.
\end{proof}

\noindent L. P.d'A.: { \sl \small Universit\'e de Reims Champagne Ardenne, CNRS, LMR FRE 2011, 51097, Reims, France} 
\newline \noindent {\tt \small email: loic.poulain-dandecy@univ-reims.fr}\\

\noindent R. W.: { \sl \small Universit\'{e} Paris Diderot-Paris VII, B\^{a}timent Sophie Germain, 75205 Paris Cedex 13 France} 
\newline \noindent {\tt \small email: ruari.walker@imj-prg.fr}

\end{document}